\newcommand{\below}{\preccurlyeq}
\newcommand{\sbelow}{\prec}
\newcommand{\vect}{\textbf}
\newcommand{\orl}{\vee}
\newcommand{\andl}{\wedge}
\newcommand{\notl}{\overline}
\newcommand{\splitf}{split}
\newtheorem{theorem}{Theorem}
\newtheorem{lemma}{Lemma}
\newtheorem{corollary}{Corollary}
\newtheorem{claim}{Claim}
\newtheorem{definition}{Definition}
\newtheorem{conjecture}{Conjecture}
\newtheorem{observation}{Observation}
\begin{document}

\author{
  Vadim Lozin\thanks{Mathematics Institute, University of Warwick, UK. E-mail: V.Lozin@warwick.ac.uk.}
  \and
  Igor Razgon\thanks{Department of Computer Science and Information Systems, Birkbeck University of London, UK. E-mail: Igor@dcs.bbk.ac.uk.}     
  \and
  Viktor Zamaraev\thanks{Mathematics Institute, University of Warwick, UK. E-mail: V.Zamaraev@warwick.ac.uk.}
  \and
  Elena Zamaraeva\thanks{Mathematics Institute, University of Warwick, UK. E-mail: E.Zamaraeva@warwick.ac.uk.}
  \and
  Nikolai Yu. Zolotykh\thanks{Institute of Information Technology, Mathematics and Mechanics,
  Lobachevsky State University of Nizhni Novgorod, Russia. E-mail: Nikolai.Zolotykh@itmm.unn.ru.}
}

\title{Specifying a positive threshold function via extremal points}



\maketitle


\begin{abstract}
An extremal point of a positive threshold Boolean function $f$ is either a maximal zero or a minimal one.
It is known that if $f$ depends on all its variables, then the set of its extremal points completely specifies $f$ within the universe 
of threshold functions. However, in some cases, $f$ can be specified by a smaller set. The minimum number of 
points in such a set is the specification number of $f$. 
It was shown in [S.-T.~Hu. Threshold Logic, 1965] that the specification number of a threshold function of 
$n$ variables is at least $n+1$.  
In [M.~Anthony, G.~Brightwell, and J.~Shawe-Taylor. On specifying Boolean functions by labelled examples.
\textit{Discrete Applied Mathematics}, 1995] it was proved that this bound is attained for nested functions and conjectured that for all other threshold functions the specification number is strictly greater than $n+1$.
In the present paper, we resolve this conjecture negatively by 
exhibiting threshold Boolean functions of $n$ variables, which are non-nested and 
for which the specification number is $n+1$. On the other hand, we show that the set of extremal points satisfies 
the statement of the conjecture, i.e.~a positive threshold Boolean function depending on all its $n$ variables has $n+1$ extremal points if and only if
it is nested. To prove this, we reveal an underlying structure of the set of extremal points. 
\end{abstract}

\section{Introduction}

A Boolean function is called a {\it threshold function} (also known as {\it linearly separable} or a {\it halfspace}) 
if there exists a hyperplane separating true and false points of the function.
Threshold functions play fundamental role in the theory of Boolean functions and they appear in a variety of applications such as
electrical engineering, artificial neural networks, reliability theory, game theory etc. (see, for example, \cite{CH11}).

We study the problem of teaching threshold functions in the context of on-line learning with a helpful teacher \cite{GoldmanKearns1995}.
Speaking informally, {\it teaching} an unknown function $f$ in a given class is the problem of producing 
its {\it teaching} (or {\it specifying}) set, i.e. a set of points in the domain which uniquely specifies $f$.
In the present paper, the universe is the set of threshold functions and a specifying set for $f$ is 
a subset $S$ of the points of the Boolean cube such that $f$ is the only threshold function which is consistent with $f$ on $S$.

It is not difficult to see that in the worst case the specifying set contains  all the $2^n$ points of the Boolean cube.
However, in some cases, a threshold function $f$ can be specified by a smaller set, for instance, when $f$ depends on all its variables
and is {\it positive} (or {\it increasing}), i.e. a function where an increase of a variable cannot lead to a decrease of the function. 
In this case, $f$ can be specified by the set of its {\it extremal points}, i.e. its maximal false and minimal true points, of which 
there are at most $\binom{n+1}{\left\lfloor \frac{n+1}{2}\right\rfloor}$ \cite{AnthonyBrightwellShaweTaylor1995}.
Moreover, this description can also be redundant, i.e. sometimes a positive
threshold function $f$ can be specified by a proper subset of its extremal points. 
The minimum cardinality of a teaching set of $f$, i.e. the minimum number of points needed to specify $f$,
is the {\it specification number} of $f$. 
The maximum specification number over all functions in a class is the {\it teaching dimension} of the class.

\cite{Hu1965} showed that the specification number of a threshold function with $n$ variables is at least $n+1$.
\cite{AnthonyBrightwellShaweTaylor1995} proved that this bound is attained for so-called nested functions by showing that positive nested functions contain precisely $n+1$ extremal points.
They also conjectured  that for all other threshold functions 
with $n$ variables the specification number is strictly greater than $n+1$.

\paragraph*{Our contribution}

As our first result, we disprove the conjecture of \cite{AnthonyBrightwellShaweTaylor1995} by showing that for any $n \geq 4$ 
there exist threshold functions with $n$ variables which are non-nested and for which the specification number is $n+1$.

To state our second result, we observe that for positive nested functions the specifying set coincides with the set of extremal points.
This is not the case in our counterexamples to the above conjecture. 
Therefore, our negative resolution of the conjecture leaves open the question on the number of extremal points: 
is it true that for any positive threshold function different 
from nested, the number of extremal points is strictly greater than $n+1$? 
In this paper, we answer this question positively. Moreover, we prove a slightly more general result dealing 
with so-called linear read-once functions, which is an extension of nested functions allowing irrelevant variables (see Section~\ref{sec:pre} for precise definitions). 
More formally, we prove that a positive threshold function $f$ with $k \geq 0$ relevant variables
has exactly $k+1$ extremal points if and only if $f$ is linear read-once. 
Our solution is based on revealing an underlying structure of the set of extremal points.

\paragraph*{Related work}

Upper and lower bounds and the average value for the specification number of a threshold Boolean
function are obtained in \cite{AnthonyBrightwellShaweTaylor1995}.

A number of papers are devoted to the teaching dimension for the class of threshold functions of $k$-valued logic,
i.e. halfspaces defined on the domain $\{0,1,\dots,k-1\}^n$.
Upper bounds for the teaching dimension are obtained in \cite{Hegedus1994,ChirkovZolotykh2016}.
A tight lower bound is stated in \cite{ShevchenkoZolotykh1998}. 
The special case $n=2$ is considered in \cite{AlekseyevBasovaZolotykh,Zamaraeva2016,Zamaraeva2017}.

The problem of teaching is closely related to the problem of learning 
\cite{Angluin1988,AizensteinHegedusHellersteinPitt1998}. 
Learning threshold functions with membership or/and equivalence queries is studied 
by \cite{MaassTuran1994,Hegedus1994,Hegedus1995,ZolotykhShevchenko1997}.
Special case $n = 2$ is considered in \cite{BultmanMaass1995}.
Learning threshold Boolean functions with small weights is investigated in \cite{AbboudAghaBshoutyRadwanSaleh1999,Bshouty2016}.

Teaching or/and learning different classes of read-once (or repetition-free) functions are considered
in \cite{AngluinHellersteinKarpinski1993,BshoutyHancockHellerstein1995,Chistikov2011,ChistikovFedorovaVoronenko2014}.
The importance of linear read-once functions in learning theory is evidenced, in particular, by 
their connection with special types of decision lists \cite{Rivest1987}.

\paragraph*{Organization of the paper} 
All preliminary information related to the paper can be found in Section~\ref{sec:pre}.
The refutation of the conjecture is presented in Section~\ref{sec:cn}. Section~\ref{sec:ex} contains the results about extremal points of a threshold function. 
Section~\ref{sec:con} concludes the paper with a number of open problems.

\section{Preliminaries}
\label{sec:pre}

Let $B = \{ 0, 1 \}$. 
For a \textit{point} $\vect{x} \in B^n$ we denote by $(\vect{x})_i$ the $i$-th
coordinate of $\vect{x}$,
and by $\notl{\vect{x}}$ the point in $B^n$
with $(\notl{\vect{x}})_i = 1$ if and only if $(\vect{x})_i = 0$ for every $i \in [n]$.

Let $f=f(x_1, \ldots, x_n)$ be a Boolean function  on $B^n$. For $k \in [n]$ and $\alpha_k \in \{ 0, 1\}$ 
we  denote by $f_{|x_k=\alpha_k}$ the Boolean function on $B^{n-1}$ defined as
follows:
$$
	f_{|x_k=\alpha_k} (x_1, \ldots, x_{k-1}, x_{k+1}, \ldots, x_n) =
	f(x_1, \ldots, x_{k-1}, \alpha_k, x_{k+1}, \ldots, x_n).
$$
For $i_1, \ldots, i_k \in [n]$ and $\alpha_1, \ldots, \alpha_k \in \{ 0,1 \}$ we denote by
$f_{|x_{i_1} = \alpha_1, \ldots, x_{i_k} = \alpha_k}$ the function 
$(f_{|x_{i_1} = \alpha_1, \ldots, x_{i_{k-1}} = \alpha_{k-1}})_{|x_{i_k} = \alpha_k}$.
We say that $f_{|x_{i_1} = \alpha_1, \ldots, x_{i_k} = \alpha_k}$ is the \textit{restriction} of 
$f$ to $x_{i_1} = \alpha_1, \ldots, x_{i_k} = \alpha_k$.
We also say that a Boolean function $g$ is a \textit{restriction} of a Boolean function $f$
if there exist $i_1, \ldots, i_k \in [n]$ and $\alpha_1, \ldots, \alpha_k \in \{ 0,1 \}$ such that
$g \equiv f_{|x_{i_1} = \alpha_1, \ldots, x_{i_k} = \alpha_k}$, i.e.,
$g(\vect{x}) =  f_{|x_{i_1} = \alpha_1, \ldots, x_{i_k} = \alpha_k}(\vect{x})$ for every $\vect{x} \in B^{n-k}$.

\begin{definition}
A variable $x_k$ is called {\it irrelevant} for $f$ if $f_{|x_k=1} \equiv f_{|x_k=0}$.
Otherwise, $x_k$ is called \textit{relevant} for $f$.
If $x_k$ is irrelevant for $f$ we will also say that $f$ \textit{does not depend on} $x_k$.
\end{definition}

Following the terminology of \cite{CH11}, we say that $\vect{x} \in B^n$ is a {\it true point} of $f$ if $f(\vect{x}) = 1$
and  that $\vect{x} \in B^n$ is a {\it false point} of $f$ if $f(\vect{x}) = 0$.

\subsection{Positive functions and extremal points}

By $\below$ we denote a partial order over the set $B^n$,
induced by inclusion in the power set lattice of the $n$-set.
In other words,  
$\vect{x} \below \vect{y}$ if $(\vect{x})_i = 1$ implies $(\vect{y})_i=1$.
In this case we will say that $\vect{x}$ is \textit{below} $\vect{y}$.
When $\vect{x} \below \vect{y}$ and $\vect{x} \neq \vect{y}$ we will sometimes
write $\vect{x} \sbelow \vect{y}$.

\begin{definition}
A Boolean function $f$ is called \textit{positive monotone} (or simply \textit{positive}) if $f(\vect{x}) = 1$ 
and $\vect{x} \below \vect{y}$ imply $f(\vect{y})=1$.
\end{definition}

For a positive Boolean function $f$, the set of its false points forms a down-set and 
the set of its true points forms an up-set of the partially ordered set $(B^n, \below)$.
We denote by 
\begin{itemize}
\item[$Z^f$] the set of maximal false points,
\item[$U^f$] the set of minimal true points. 
\end{itemize}
We will refer to a point in $Z^f$
as a \textit{maximal zero of $f$} and to a point in $U^f$ as a \textit{minimal one of $f$}.
A point will be called an \textit{extremal point of $f$} if it is either a maximal zero or a minimal one of $f$.
We denote by 
\begin{itemize}
\item[$r(f)$] the number of extremal points of $f$.
\end{itemize}

\subsection{Threshold functions}
\begin{definition}
A Boolean function $f$ on $B^n$ is called a \textit{threshold function} if there exist
$n$ \textit{weights} $w_1, \ldots, w_n \in \mathbb{R}$ and a \textit{threshold} $t \in \mathbb{R}$
such that, for all $(x_1, \ldots, x_n) \in B^n$,
$$
	f(x_1, \ldots, x_n) = 0 \iff \sum\limits_{i=1}^n w_i x_i \leq t.
$$
\end{definition}
The inequality $w_1 x_1 + \ldots + w_n x_n \leq t$ is called \textit{threshold inequality} representing
function $f$.
It is not hard to see that there are uncountably many different threshold inequalities representing a given 
threshold function, and if there exists an inequality with non-negative weights, then $f$ is a positive function.


Let $k \in \mathbb{N}, k \geq 2$. A Boolean function $f$ on $B^n$ is \textit{$k$-summable}
if, for some $r \in \{ 2, \ldots, k \}$, there exist $r$ (not necessarily distinct)
false points $\vect{x}_1, \ldots, \vect{x}_r$ and $r$ (not necessarily distinct) 
true points $\vect{y}_1, \ldots, \vect{y}_r$ such that 
$\sum_{i=1}^r \vect{x}_i = \sum_{i=1}^r \vect{y}_i$ (where the summation is over $\mathbb{R}^n$).
A function is \textit{asummable} if it is not $k$-summable for all $k \geq 2$.

\begin{theorem} \label{th:asum} {\rm {\cite{E61}}}   
	A Boolean function is a threshold function if and only if it is asummable.
\end{theorem}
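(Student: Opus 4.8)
The plan is to prove the two implications separately: the ``only if'' direction by a short averaging argument, and the ``if'' direction by contraposition, combining the separating hyperplane theorem for compact convex sets with a clearing-of-denominators trick that turns a geometric intersection into a combinatorial summability witness.

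For the ``only if'' direction, assume $f$ is represented by the threshold inequality $\sum_{i=1}^n w_i x_i \le t$ and, for contradiction, that $f$ is $k$-summable, witnessed by false points $\vect{x}_1, \dots, \vect{x}_r$ and true points $\vect{y}_1, \dots, \vect{y}_r$ with $\sum_{i=1}^r \vect{x}_i = \sum_{i=1}^r \vect{y}_i$. Writing $\langle w, \vect{z} \rangle = \sum_{j=1}^n w_j (\vect{z})_j$, each false point satisfies $\langle w, \vect{x}_i \rangle \le t$ and each true point satisfies $\langle w, \vect{y}_i \rangle > t$; summing over $i$ and using linearity, $\sum_{i=1}^r \langle w, \vect{x}_i \rangle = \langle w, \sum_i \vect{x}_i \rangle = \langle w, \sum_i \vect{y}_i \rangle = \sum_{i=1}^r \langle w, \vect{y}_i \rangle$, so this common number is simultaneously $\le rt$ and $> rt$, a contradiction.

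For the ``if'' direction I would prove the contrapositive: if $f$ is not a threshold function, then $f$ is $k$-summable for some $k \ge 2$. Such an $f$ is non-constant, so $T := f^{-1}(1)$ and $F := f^{-1}(0)$ are nonempty finite subsets of $\mathbb{R}^n$. First I would show that $\mathrm{conv}(T)$ and $\mathrm{conv}(F)$ must intersect: otherwise, being disjoint nonempty compact convex sets, they could be strictly separated by a hyperplane $\langle w, \vect{z} \rangle = t'$ with $\langle w, \vect{x} \rangle < t'$ for all $\vect{x} \in F$ and $\langle w, \vect{y} \rangle > t'$ for all $\vect{y} \in T$, and then, since $F$ and $T$ are finite, any $t$ strictly between $\max_{\vect{x} \in F} \langle w, \vect{x} \rangle$ and $\min_{\vect{y} \in T} \langle w, \vect{y} \rangle$ would give $f(\vect{z}) = 0 \iff \langle w, \vect{z} \rangle \le t$, contradicting the assumption. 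Next, since $\mathrm{conv}(T) \cap \mathrm{conv}(F)$ is a polytope cut out by linear inequalities with integer coefficients, it contains a rational point $\vect{p}$; moreover the set of coefficient vectors expressing $\vect{p}$ as a convex combination of the (integer) points of $F$ is a nonempty rational polyhedron, hence contains a rational point, and likewise for $T$. Thus $\sum_{\vect{x} \in F} \lambda_{\vect{x}} \vect{x} = \sum_{\vect{y} \in T} \mu_{\vect{y}} \vect{y}$ with nonnegative rationals $\lambda_{\vect{x}}, \mu_{\vect{y}}$ summing to $1$ on each side. Finally, multiplying through by the least common multiple $D$ of all denominators, the integers $D \lambda_{\vect{x}}$ and $D \mu_{\vect{y}}$ are nonnegative and sum to $D$ on each side, so listing every point with its multiplicity exhibits $D$ (not necessarily distinct) false points whose sum equals the sum of $D$ (not necessarily distinct) true points; and $D \ge 2$, since $D = 1$ would force a single false point to coincide with a single true point. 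Hence $f$ is $D$-summable.

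The main obstacle is the ``if'' direction, and within it the genuinely nontrivial point is the passage from a mere nonempty intersection $\mathrm{conv}(T) \cap \mathrm{conv}(F)$ to an \emph{integer} multiset identity with the \emph{same} number of summands on both sides; this needs the rationality argument above, which in turn relies on the $0/1$ integrality of the cube, rather than an arbitrary real point of the intersection. The strict separation of disjoint compact convex sets and the final adjustment of the separating constant to the ``$\le t$'' convention are routine given that $B^n$ is finite.
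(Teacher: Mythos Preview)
The paper does not supply a proof of this theorem; it is quoted as a classical result of Elgot with a citation to \cite{E61}, so there is no ``paper's own proof'' to compare against. Your argument is therefore to be judged on its own, and it is correct: the ``only if'' direction is the standard averaging contradiction, and for the ``if'' direction your chain --- strict separation of disjoint compact convex hulls, rationality of a vertex of $\mathrm{conv}(T)\cap\mathrm{conv}(F)$ and of the corresponding barycentric coefficients (all defining constraints are rational because the points lie in $B^n$), and clearing denominators to obtain equal-length multisets --- is exactly the classical route. The observation that $D=1$ would force a common point of $T$ and $F$ is the right way to secure $r\ge 2$ as required by the definition of $k$-summability.
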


\subsection{Linear read-once functions and nested functions}

A Boolean function $f$ is called \textit{linear read-once}
if it is either a constant function, or it can be represented by a \textit{nested formula}
defined recursively as follows:
\begin{enumerate}
	\item both literals $x$ and $\notl{x}$ are nested formulas;
	\item $x \orl t$, $x \andl t$, $\notl{x} \orl t$, $\notl{x} \andl t$ are nested formulas,
	where $x$ is a variable and $t$ is a nested formula that contains neither $x$, nor $\notl{x}$.
\end{enumerate}

\cite{lists} showed that the class of linear read-once functions is precisely the intersection 
of threshold and read-once functions. 


A linear read-once function is called {\it nested} if it depends on all its variables.
For example, the function $(x_1 \orl x_2) x_3 x_5$ considered as a function of 5 variables $x_1, \ldots, x_5$
is linear read-once, but not nested, since $x_4$ is an irrelevant variable. If this function is considered
as a function of 4 variables $x_1, x_2, x_3, x_5$, then all its variables are relevant and therefore
the function is also nested.



It is not difficult to see that a linear read-once function $f$ is positive if and only if 
a nested formula representing $f$ does not contain negations.

\subsection{Specifying sets and specification number}

Let $\mathcal{F}$ be a class of Boolean functions of $n$ variables, and let $f \in \mathcal{F}$. 

\begin{definition}
A set of points $S \subseteq B^n$ is a \textit{specifying set for} $f$ in $\mathcal{F}$
if the only function in $\mathcal{F}$ consistent with $f$ on $S$ is $f$ itself. 
In this case we also say that $S$ \textit{specifies} $f$ in the class  $\mathcal{F}$.
The minimal cardinality of specifying set for $f$ in $\mathcal{F}$ is called the 
\textit{specification number of $f$} (in $\mathcal{F}$) and denoted $\sigma_{\mathcal{F}}(f)$.
\end{definition}
 
Let $\mathcal{H}_n$ be the class of threshold Boolean functions of $n$ variables. 
\cite{Hu1965} and later \cite{AnthonyBrightwellShaweTaylor1995} showed that the specification number of a threshold function of $n$ variables is at least $n+1$.  

\begin{theorem}\label{th:th-spec}{\rm \cite{Hu1965,AnthonyBrightwellShaweTaylor1995}}
	For any threshold Boolean function $f$ of $n$ variables $\sigma_{\mathcal{H}_n}(f) \geq n+1$. 
\end{theorem}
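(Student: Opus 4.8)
The plan is to argue by contradiction. Suppose $S = \{\vect{x}_1, \ldots, \vect{x}_m\} \subseteq B^n$ specifies $f$ in $\mathcal{H}_n$ with $m \leq n$, and fix a threshold inequality $\sum_{i=1}^n w^0_i x_i \leq t^0$ representing $f$. The key point is that a threshold function is determined by the $(n{+}1)$-dimensional parameter vector $(\vect{w},t)$, while each point of $S$ imposes only a single linear constraint on it; with at most $n$ such constraints there is room to move the separating hyperplane without disturbing $S$.

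First I would pass to a linear-algebraic picture. To each $\vect{x}_j \in S$ associate the vector $\vect{a}_j = \bigl((\vect{x}_j)_1, \ldots, (\vect{x}_j)_n, -1\bigr) \in \mathbb{R}^{n+1}$, so that for any parameter vector $(\vect{w},t)$ the value $\langle (\vect{w},t), \vect{a}_j \rangle = \sum_{i=1}^n w_i (\vect{x}_j)_i - t$ (together with the convention that equality counts as the ``false'' side) decides the output at $\vect{x}_j$ of the threshold function defined by $(\vect{w},t)$. Since $m \leq n < n+1$, the vectors $\vect{a}_1, \ldots, \vect{a}_m$ span a proper subspace of $\mathbb{R}^{n+1}$, hence there is a nonzero vector $(\vect{v},s)$ orthogonal to all of them. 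Then for every $\epsilon \in \mathbb{R}$ the perturbed parameter vector $(\vect{w}^0 + \epsilon\vect{v}, \, t^0 + \epsilon s)$ produces exactly the same values $\langle \cdot, \vect{a}_j\rangle = \sum_{i=1}^n w^0_i(\vect{x}_j)_i - t^0$ at every point of $S$, so the threshold function $g_\epsilon$ it defines agrees with $f$ on all of $S$.

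It remains to choose $\epsilon$ so that $g_\epsilon \neq f$. I would first observe that $(\vect{v},s)\neq \vect{0}$ forces $\sum_{i:(\vect{x})_i=1} v_i \neq s$ for some $\vect{x} \in B^n$: otherwise evaluating at $\vect{x}=\vect{0}$ gives $s=0$ and evaluating at each unit point gives $v_i=0$, a contradiction. Fix such a point $\vect{x}^\ast$ and put $d = \sum_{i:(\vect{x}^\ast)_i=1} v_i - s \neq 0$ and $c = \sum_{i=1}^n w^0_i(\vect{x}^\ast)_i - t^0$. The map $\epsilon \mapsto c + \epsilon d$ is a nonconstant affine function, so if $\vect{x}^\ast$ is a false point of $f$ (equivalently $c\leq 0$) we can pick $\epsilon$ with $c+\epsilon d>0$, and if $\vect{x}^\ast$ is a true point (equivalently $c>0$) we can pick $\epsilon$ with $c+\epsilon d\leq 0$; in either case $g_\epsilon(\vect{x}^\ast)\neq f(\vect{x}^\ast)$. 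Thus $g_\epsilon$ is a threshold function distinct from $f$ yet consistent with $f$ on $S$, contradicting that $S$ specifies $f$. Therefore $\sigma_{\mathcal{H}_n}(f)\geq n+1$.

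I expect the only delicate point to be the bookkeeping around the convention $f(\vect{x})=0 \iff \sum_{i=1}^n w_i(\vect{x})_i \leq t$: one must consistently keep the equality case on the ``false'' side, which is exactly why the sign-flip at $\vect{x}^\ast$ splits into the two cases above. Everything else is routine linear algebra. (The same conclusion could instead be derived from the asummability characterization in Theorem~\ref{th:asum}, but the direct perturbation argument above seems the cleanest route.)
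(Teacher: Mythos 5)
Your perturbation argument is correct: with $m\leq n$ points the vectors $\vect{a}_j=((\vect{x}_j)_1,\ldots,(\vect{x}_j)_n,-1)$ leave a nonzero orthogonal direction $(\vect{v},s)$ in the $(n+1)$-dimensional parameter space, the perturbed weights still define a threshold function agreeing with $f$ on $S$, your existence argument for a point $\vect{x}^\ast$ with $d\neq 0$ (evaluating at $\vect{0}$ and at the unit points) is sound, and the two-sided case analysis correctly respects the convention that equality lies on the ``false'' side. The paper states this theorem only as a cited result of Hu and of Anthony, Brightwell and Shawe-Taylor and supplies no proof of its own; your argument is essentially the classical dimension-counting proof from those sources.
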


It was also shown in \cite{AnthonyBrightwellShaweTaylor1995} that the nested functions attain the lower bound.

\begin{theorem}\label{th:nested-spec}{\rm \cite{AnthonyBrightwellShaweTaylor1995}}
	For any nested function $f$ of $n$ variables $\sigma_{\mathcal{H}_n}(f) = n+1$.
\end{theorem}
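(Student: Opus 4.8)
Since Theorem~\ref{th:th-spec} already gives $\sigma_{\mathcal{H}_n}(f)\ge n+1$, the plan is to produce a specifying set for a nested $f$ of size exactly $n+1$. This will be done in three moves: normalise $f$ to a positive function; take the set of extremal points as the specifying set; and count the extremal points of a positive nested function by induction on the structure of its nested formula.

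First I would reduce to the positive case. Substituting $\notl{x_i}$ for $x_i$ throughout is an involution of $B^n$ that sends threshold functions to threshold functions (negate $w_i$ and shift $t$), sends nested functions to nested functions, and sends a specifying set of $f$ in $\mathcal{H}_n$ to a specifying set of the transformed function of the same cardinality; hence $\sigma_{\mathcal{H}_n}$ is unchanged. Doing this for each negated variable, we may assume $f$ is given by a negation-free nested formula, so $f$ is positive and, being nested, depends on all $n$ variables.

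Next I would use the fact recalled in the introduction \cite{AnthonyBrightwellShaweTaylor1995} that a positive function depending on all of its $n$ variables is specified within $\mathcal{H}_n$ by its set $Z^f\cup U^f$ of extremal points; it then remains to show $r(f)=n+1$, which I would prove by induction on $n$. For $n=1$ we have $f=x_1$, $Z^f\cup U^f=\{0,1\}$ and $r(f)=2$. For $n\ge 2$ the negation-free nested formula for $f$ is not a single literal, so after relabelling the variables it is $f=x_n\andl g$ or $f=x_n\orl g$, where $g$ is a negation-free nested formula on $x_1,\dots,x_{n-1}$; since $f$ depends on all its variables so does $g$, and $g$ is non-constant, so $g(1,\dots,1)=1$ and $g(0,\dots,0)=0$. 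In the case $f=x_n\andl g$ one verifies $U^f=\{(\vect{u},1):\vect{u}\in U^g\}$ and $Z^f=\{(\vect{z},1):\vect{z}\in Z^g\}\cup\{(1,\dots,1,0)\}$, and dually in the case $f=x_n\orl g$ one gets $Z^f=\{(\vect{z},0):\vect{z}\in Z^g\}$ and $U^f=\{(\vect{u},0):\vect{u}\in U^g\}\cup\{(0,\dots,0,1)\}$. In either case the listed points are pairwise distinct and pairwise $\below$-incomparable, so $r(f)=r(g)+1=n+1$ by induction; together with Theorem~\ref{th:th-spec} this yields $\sigma_{\mathcal{H}_n}(f)=n+1$.

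The step I expect to require the most care is the verification of the descriptions of $Z^f$ and $U^f$ in the induction: one must check that the written-down points are \emph{all} the maximal zeros (resp.\ minimal ones) -- in particular that fixing $x_n$ to the value that makes $f$ constant on the remaining subcube contributes only the single point $(1,\dots,1,0)$ or $(0,\dots,0,1)$ -- and that none of them lies $\below$ another, where the non-constancy of $g$ is exactly what prevents $(1,\dots,1,0)$ (resp.\ $(0,\dots,0,1)$) from being absorbed. If one wishes to avoid quoting the ``extremal points specify'' fact, the genuinely substantive alternative is to prove directly, again by induction on the nested formula, that every threshold function agreeing with $f$ on $Z^f\cup U^f$ agrees with $f$ everywhere; this is most naturally done through the asummability criterion of Theorem~\ref{th:asum}.
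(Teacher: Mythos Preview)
Your proposal is correct and matches the approach the paper attributes to \cite{AnthonyBrightwellShaweTaylor1995}: reduce to positive functions, use that the extremal points $Z^f\cup U^f$ specify a positive threshold function depending on all its variables, and then show $|Z^f\cup U^f|=n+1$ for positive nested $f$. Your inductive count of the extremal points via the decomposition $f=x_n\andl g$ or $f=x_n\orl g$ is exactly the argument the paper itself carries out (in slightly greater generality) in Lemma~\ref{lem:split}, where one observes that fixing the split variable contributes a single new extremal point and the rest are in bijection with the extremal points of the $(n-1)$-variable restriction.
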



\subsection{Essential points}

In estimating the specification number of a threshold Boolean function $f \in \mathcal{H}_n$ 
it is often useful to consider essential points of $f$ defined as follows.

\begin{definition} 
A point $\vect{x}$ is \textit{essential} for $f$ (with respect to class $\mathcal{H}_n$), if
there exists a function $g \in \mathcal{H}_n$ such that $g(\vect{x}) \neq f(\vect{x})$ and
$g(\vect{y}) = f(\vect{y})$ for every $\vect{y} \in B^n$, $\vect{y} \neq \vect{x}$.
\end{definition}
 
Clearly, any specifying set for $f$ must contain all essential points for $f$. It turns out that 
the essential points alone are sufficient to specify $f$ in $\mathcal{H}_n$ \cite{C65}.
Therefore, we have the following well-known result.
\begin{theorem}\label{cl:ess-sigma}{\rm \cite{C65}}
	The specification number $\sigma_{\mathcal{H}_n}(f)$ of a function $f \in \mathcal{H}_n$
	is equal to the number of essential points of $f$.
\end{theorem}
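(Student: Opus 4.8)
The plan is to establish the two inequalities $\sigma_{\mathcal{H}_n}(f)\ge e(f)$ and $\sigma_{\mathcal{H}_n}(f)\le e(f)$ separately, where $e(f)$ denotes the number of essential points of $f$ and $E$ denotes the set of these points. The inequality $\sigma_{\mathcal{H}_n}(f)\ge e(f)$ is immediate from the definitions: if $\vect{x}$ is essential, witnessed by some $g\in\mathcal{H}_n$ with $g(\vect{x})\ne f(\vect{x})$ and $g(\vect{y})=f(\vect{y})$ for all $\vect{y}\ne\vect{x}$, then any set $S\subseteq B^n$ with $\vect{x}\notin S$ has $g|_S=f|_S$ while $g\ne f$, so $S$ is not specifying; hence every specifying set for $f$ contains $E$, and $\sigma_{\mathcal{H}_n}(f)\ge|E|$.

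For the reverse inequality I would show that the set $E$ of essential points is itself a specifying set for $f$, which gives $\sigma_{\mathcal{H}_n}(f)\le|E|=e(f)$. Suppose, for contradiction, that some $g\in\mathcal{H}_n$ with $g\ne f$ agrees with $f$ on $E$, and put $D=\{\vect{x}\in B^n:f(\vect{x})\ne g(\vect{x})\}$, so that $D\ne\emptyset$ and $D\cap E=\emptyset$. I would then connect $f$ to $g$ by a straight-line homotopy in the space of threshold representations: fix a representation $(\vect{w}^f,t^f)$ of $f$ and $(\vect{w}^g,t^g)$ of $g$, and for $\lambda\in[0,1]$ let $f^\lambda$ be the threshold function defined by the weight vector $\lambda\vect{w}^f+(1-\lambda)\vect{w}^g$ and threshold $\lambda t^f+(1-\lambda)t^g$, so $f^1=f$ and $f^0=g$. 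For a fixed point $\vect{x}$ the expression $(\lambda\vect{w}^f+(1-\lambda)\vect{w}^g)\cdot\vect{x}-(\lambda t^f+(1-\lambda)t^g)$ is affine in $\lambda$; unless it vanishes identically it is zero for at most one value $\lambda_{\vect{x}}\in(0,1)$, and the value of $f^\lambda$ at $\vect{x}$ is constant on each side of $\lambda_{\vect{x}}$.

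The crucial preparatory step is to choose the two representations in general position, so that: (i) no point of $B^n$ lies on the separating hyperplane of $f$ or of $g$ — possible because the realizing set of a threshold function is open and full-dimensional, as from any representation one obtains a neighbourhood of representations by slightly raising the threshold and perturbing the weights; and (ii) the transition values $\lambda_{\vect{x}}$ of the finitely many transitioning points are pairwise distinct — achievable after a generic perturbation within the open set of admissible pairs, since for two distinct points the equality $\lambda_{\vect{x}}=\lambda_{\vect{x}'}$ is a polynomial condition on $(\vect{w}^f,t^f,\vect{w}^g,t^g)$ and thus fails off a proper algebraic subset. With (i) and (ii) in place, as $\lambda$ runs from $1$ down to $0$ the function $f^\lambda$ equals $f$ for $\lambda$ near $1$, equals $g$ for $\lambda$ near $0$, lies in $\mathcal{H}_n$ for every $\lambda$, and changes only at the finitely many transition values, each time flipping its value at exactly one point. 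A point transitions precisely when $f(\vect{x})\ne g(\vect{x})$, and it transitions at most once, so the set of transitioning points is exactly $D$; listing them as $\vect{x}_1,\dots,\vect{x}_m$ with $\lambda_{\vect{x}_1}>\dots>\lambda_{\vect{x}_m}$, the threshold function $f^\lambda$ for $\lambda$ just below $\lambda_{\vect{x}_1}$ differs from $f=f^1$ only at $\vect{x}_1$, so $\vect{x}_1$ is essential for $f$; but $\vect{x}_1\in D$, contradicting $D\cap E=\emptyset$. Therefore $E$ specifies $f$, which completes the proof.

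The step I expect to be the main obstacle is making the general-position argument precise — verifying that the realizing set of a threshold function has nonempty interior and that within the product of two such interiors the pairs of representations for which two distinct points share a transition value are confined to a nowhere-dense set; once this is granted, the single-point-flip structure of the homotopy and the identification of the first transitioning point as an essential point of $f$ are routine. (An alternative route to the $\le$ direction goes through Theorem~\ref{th:asum}: since $f$ is asummable, any $k$-summability certificate of the function obtained from $f$ by flipping its value at a point $\vect{x}\in D$ must involve $\vect{x}$, and combining such a certificate with the fact that $g$ agrees with $f$ outside $D$ would yield a summability certificate for $g$ provided the certificate meets no other point of $D$; enforcing this last condition, however, seems to require essentially the same extremality analysis as above.)
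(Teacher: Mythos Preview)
The paper does not supply a proof of this theorem: it records the easy direction (``any specifying set for $f$ must contain all essential points'') in the text and attributes the converse, and hence the theorem, to \cite{C65}. So there is no in-paper argument to compare against.

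Your proof is correct. The lower bound $\sigma_{\mathcal{H}_n}(f)\ge |E|$ matches the paper's one-line remark. For the upper bound, the straight-line homotopy in parameter space is a clean way to produce a threshold function differing from $f$ at a single point of $D$: once strict representations are chosen, each $\vect{x}\notin D$ has $\phi_{\vect{x}}(\lambda)$ of constant sign on $[0,1]$, each $\vect{x}\in D$ crosses exactly once, and the first crossing exhibits an essential point inside $D$. The general-position step you flag is routine: fixing a strict representation of $g$ (so $b_{\vect{x}}:=\vect{w}^g\!\cdot\vect{x}-t^g\neq 0$ for all $\vect{x}$), the coincidence $\lambda_{\vect{x}}=\lambda_{\vect{x}'}$ reduces to $a_{\vect{x}}b_{\vect{x}'}=a_{\vect{x}'}b_{\vect{x}}$, a nontrivial linear equation in $(\vect{w}^f,t^f)$ for distinct $\vect{x},\vect{x}'$, so only finitely many hyperplanes must be avoided inside the open set of strict representations of $f$. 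This ``rotate the separating hyperplane and watch which point flips first'' argument is precisely the geometric viewpoint of Cover's paper, so your route is in the spirit of the cited source even though the present paper gives no details.
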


\subsection{The number of essential points versus the number of extremal points}

It was observed in \cite{AnthonyBrightwellShaweTaylor1995} that in the study of specification number of threshold functions, one can be restricted to positive functions. 
To prove Theorem~\ref{th:nested-spec}, \cite{AnthonyBrightwellShaweTaylor1995} first showed that for a positive
threshold function $f$, which depends on all its variables, the set $Z^f \cup U^f$ of extremal points
specifies $f$. Then they proved that for any positive nested function $f$ of $n$ variables
$|Z^f \cup U^f| = n+1$.

In addition to proving Theorem~\ref{th:nested-spec}, \cite{AnthonyBrightwellShaweTaylor1995} also conjectured
that nested functions are the only functions with the specification number $n+1$ in the class 
$\mathcal{H}_n$.

\begin{conjecture}\label{con:conjecture}{\rm\cite{AnthonyBrightwellShaweTaylor1995}}
	If $f \in \mathcal{H}_n$ has the specification number $n+1$, then $f$ is nested.
\end{conjecture}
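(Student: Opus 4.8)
The plan is to attack the contrapositive of Conjecture~\ref{con:conjecture}: show that every non-nested $f \in \mathcal{H}_n$ has $\sigma_{\mathcal{H}_n}(f) \ge n+2$. By Theorem~\ref{cl:ess-sigma} this amounts to bounding below the number of essential points of $f$, and, as observed in the paper, one may assume $f$ is positive and depends on all $n$ variables. Since $Z^f \cup U^f$ is a specifying set for such an $f$ while every essential point lies in \emph{every} specifying set, the essential points form a subset of the extremal points; hence $\sigma_{\mathcal{H}_n}(f) = r(f)$ minus the number of non-essential extremal points. The tempting route is therefore: (i) prove $r(f) \ge n+2$ for non-nested positive $f$ depending on all its variables --- which is essentially the content announced for Section~\ref{sec:ex} --- and (ii) argue that every extremal point of such an $f$ is in fact essential, so that $\sigma_{\mathcal{H}_n}(f) = r(f) \ge n+2$.

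I expect step (ii), and hence this whole line of attack, to fail --- that is, I expect Conjecture~\ref{con:conjecture} to be \emph{false}. A maximal zero $\vect{x}$ of $f$ is essential precisely when the function obtained from $f$ by re-labelling $\vect{x}$ as a true point is still threshold, and by Theorem~\ref{th:asum} this breaks down exactly when the remaining points of $f$ already \emph{force} $f(\vect{x})=0$ via an equality among sums of true and false points. When $f$ carries extra combinatorial symmetry, many extremal points become redundant in this sense: they are forced by the other points and are not essential, even though they still contribute to the count $r(f)$. Thus the inequality $r(f)\ge n+2$ need not propagate to $\sigma_{\mathcal{H}_n}(f)$, and the conjecture cannot be reached this way.

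Accordingly, I would switch to \emph{refuting} the conjecture by exhibiting, for every $n \ge 4$, a non-nested threshold function with exactly $n+1$ essential points. I would start at $n=4$, using the fact that restrictions of nested functions are nested (substituting a constant into a read-once formula cannot create a repeated variable): any threshold function having the three-variable majority as a restriction is automatically non-nested. A natural candidate family has a few ``heavy'' variables and the rest of weight $1$; for instance $f_4$ defined by $2x_1+2x_2+x_3+x_4 \ge 4$ becomes the majority of $\{x_1,x_2,x_4\}$ when $x_3=1$, hence is non-nested, while being threshold by construction and positive. I would list $Z^{f_4}$ and $U^{f_4}$ explicitly and, for each extremal point, either produce a threshold function flipping only that point (certifying it essential) or produce a summability certificate in the sense of Theorem~\ref{th:asum} (certifying it non-essential), tuning the split between heavy and light variables until exactly $5$ extremal points survive. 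Having secured a $4$-variable example, I would lift it to all $n \ge 4$ by taking a conjunction (or disjunction) with $n-4$ fresh variables --- a nesting-type operation --- and verify that this preserves non-nestedness and thresholdness, adds $n-4$ new extremal points (the all-ones points with a single new coordinate set to $0$), and increases the essential-point count by exactly one per new variable, yielding $\sigma_{\mathcal{H}_n}(f)=5+(n-4)=n+1$.

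The main obstacle is the verification in the previous paragraph: deciding essentiality of individual extremal points is a case analysis through Theorem~\ref{th:asum} rather than a single clean inequality, and the delicate point is to arrange that the surviving essential points number \emph{exactly} $n+1$ (not $n+2$ or more) and still specify $f$. This is what forces the search over weight patterns in the $4$-variable base case and the careful bookkeeping of how the lifting operation acts on the set of essential points.
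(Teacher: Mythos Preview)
Your instinct is right: the conjecture is false, and the paper disproves it by explicit counterexample rather than proving it. So the overall direction of your proposal matches the paper's approach.

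However, your concrete base case is wrong. The function $f_4$ given by $2x_1+2x_2+x_3+x_4 \ge 4$ has the automorphism group $S_2 \times S_2$ (swap $x_1\!\leftrightarrow\! x_2$, swap $x_3\!\leftrightarrow\! x_4$), and the four maximal zeros $(0,1,1,0),(0,1,0,1),(1,0,1,0),(1,0,0,1)$ form a single orbit. They are all essential: flipping $(0,1,1,0)$ to a true point yields a function represented by weights $(2,4,3,1)$ with threshold between $5$ and $6$, hence threshold. Likewise the minimal ones $(1,0,1,1)$ and $(0,1,1,1)$ are essential (after flipping $(1,0,1,1)$ to false, weights $(2,3,1,1)$ with threshold between $4$ and $5$ work). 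Thus $\sigma_{\mathcal{H}_4}(f_4)\ge 6$, and your proposed $f_4$ is \emph{not} a counterexample. The symmetry that made the function easy to write down is precisely what prevents any extremal point from being redundant.

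The paper's family is $f_n = x_1x_2 \vee x_1x_3 \vee \cdots \vee x_1x_{n-1} \vee x_2x_3\cdots x_n$, with threshold representation $(2n-5)x_1 + 2(x_2+\cdots+x_{n-1}) + x_n \le 2n-4$; for $n=4$ the weights are $(3,2,2,1)$, not $(2,2,1,1)$. The single heavy variable $x_1$ breaks the symmetry and creates two distinguished maximal zeros $\vect z_1,\vect z_2$ together with $n-2$ ``ordinary'' maximal zeros $\vect y_1,\ldots,\vect y_{n-2}$. The key identity $\vect x_i + \vect y_i = \vect z_1 + \vect z_2$ then shows, via Theorem~\ref{th:asum}, that each $\vect y_i$ is non-essential, leaving exactly $n+1$ essential points. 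The paper also does not lift from a $4$-variable seed; the family is constructed directly for every $n\ge 4$, which sidesteps the bookkeeping you anticipate in the lifting step.

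So: keep the strategy, but replace your symmetric candidate by one with a single dominant variable, and look for a uniform $2$-summability certificate of the form ``minimal one $+$ non-essential maximal zero $=$ sum of two fixed maximal zeros''.
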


In the present paper, we disprove Conjecture \ref{con:conjecture} by demonstrating
for every $n \geq 4$ a threshold non-nested function of $n$ variables with the specification
number $n+1$.

On the other hand, we show that the conjecture becomes a true statement if
we replace `specification number' by `number of extremal points'. In fact, we prove a more general
result saying that a positive threshold function $f$ with $k$ relevant variables is linear read-once if and only if it has exactly $k+1$ extremal
points.
For this purpose, the following special type of functions appears to be technically useful.

\begin{definition}
We say that a Boolean function $f = f(x_1, \ldots, x_n)$ is \textit{\splitf{}} if there exists
$i \in [n]$ such that $f_{|x_i=0} \equiv \vect{0}$ or $f_{|x_i=1} \equiv \vect{1}$.
\end{definition}

In what follows, we will need the next two observations, which can be easily verified.

\begin{observation}\label{obs:lro-split}
	Any positive linear read-once function is split.
\end{observation}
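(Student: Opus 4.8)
The plan is to argue directly from the recursive structure of a negation-free nested formula representing $f$. First I would dispose of the degenerate cases. If $f \equiv \vect{0}$, then $f_{|x_i=0} \equiv \vect{0}$ for any $i \in [n]$; if $f \equiv \vect{1}$, then $f_{|x_i=1} \equiv \vect{1}$ for any $i \in [n]$. Hence every constant function is \splitf{} (for $n \geq 1$; for $n = 0$ the notion is vacuous). So from now on assume $f$ is not constant.

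Since $f$ is a positive linear read-once function, by definition it is represented by a nested formula containing no negations. By the recursive definition of nested formulas, a negation-free, non-constant such formula is either a single positive literal $x_j$, or has one of the two forms $x_j \andl t$ or $x_j \orl t$, where $j \in [n]$, $x_j$ is a variable, and $t$ is a negation-free nested formula over a set of variables disjoint from $\{x_j\}$. I would then check the three cases: if $f = x_j$, then $f_{|x_j=0} \equiv \vect{0}$; if $f = x_j \andl t$, then substituting $x_j = 0$ annihilates the conjunction, so $f_{|x_j=0} \equiv \vect{0}$; and if $f = x_j \orl t$, then substituting $x_j = 1$ forces the disjunction to $1$, so $f_{|x_j=1} \equiv \vect{1}$. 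In each case we obtain an index $i \in [n]$ witnessing that $f$ is \splitf{}.

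There is essentially no obstacle here beyond bookkeeping. The one mild technical point I would state explicitly rather than gloss over is that a nested formula representing $f$ need not mention all $n$ variables of the ambient cube $B^n$; but as soon as $f$ is non-constant, such a formula mentions at least one variable $x_j$, and the case analysis above invokes only that variable, so the argument goes through. This completes the proof of Observation~\ref{obs:lro-split}.
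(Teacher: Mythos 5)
Your proof is correct: the case analysis on the top level of a negation-free nested formula (single literal, $x_j \andl t$, or $x_j \orl t$), together with the handling of constants, establishes the observation completely. The paper states this observation without proof (``can be easily verified''), and your argument is exactly the intended verification.
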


\begin{observation}\label{obs:lro-restriciton}
	Any restriction of a linear read-once function is also linear read-once.	
\end{observation}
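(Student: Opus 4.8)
The plan is to prove the statement by reducing to single-variable restrictions and then arguing by induction on the structure of a nested formula. First I would observe that any restriction $f_{|x_{i_1}=\alpha_1, \ldots, x_{i_k}=\alpha_k}$ is obtained by applying single-variable restrictions one coordinate at a time, so it suffices to show that for any linear read-once function $f$, any variable $x_k$, and any value $\alpha \in \{0,1\}$, the restriction $f_{|x_k=\alpha}$ is again linear read-once; iterating this statement then yields the general case.

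For the single-variable case I would induct on the number of literals in a nested formula representing $f$, with constants as a separate base. If $f$ is a constant, every restriction equals that same constant and is linear read-once. If $f$ is a single literal $x_j$ or $\notl{x_j}$, then restricting $x_j$ yields a constant, while restricting any other variable leaves $f$ unchanged; both outcomes are linear read-once. For the inductive step, write $f = \ell \circ t$, where $\ell$ is a literal on some variable $x_j$, $\circ \in \{\orl, \andl\}$, and $t$ is a nested formula containing neither $x_j$ nor $\notl{x_j}$. If the restricted variable is $x_j$, then $\ell$ collapses to a constant $c$ and $f_{|x_j=\alpha} = c \circ t$; the identities $0 \orl t = t$, $1 \orl t = 1$, $0 \andl t = 0$, $1 \andl t = t$ show that this is either $t$ itself or a constant, hence linear read-once. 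If instead the restricted variable $x_k$ differs from $x_j$, then $f_{|x_k=\alpha} = \ell \circ t_{|x_k=\alpha}$, and since $t$ has strictly fewer literals than $f$, the induction hypothesis gives that $t_{|x_k=\alpha}$ is linear read-once. If it is a constant we simplify $\ell \circ (\text{constant})$ to a literal or a constant exactly as before; otherwise it is a nested formula which, being a restriction of $t$, still avoids $x_j$ and $\notl{x_j}$, so $\ell \circ t_{|x_k=\alpha}$ is a nested formula by the second clause of the definition.

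The only point requiring care is the interaction of the two simplification steps: after restricting the top variable $x_j$, or after the recursive part $t$ collapses to a constant, the combination $c \circ t$ or $\ell \circ c$ must be recognized as linear read-once rather than as an expression violating the ``fresh variable'' requirement in clause~2. The four Boolean identities for $\circ$ with a constant argument resolve this uniformly, and since a restriction never introduces a new variable, the freshness condition on the subformula $t$ is automatically preserved throughout. With these observations the induction goes through, completing the proof.
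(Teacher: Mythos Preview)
Your proof is correct. The paper does not actually give a proof of this observation; it simply asserts that both Observations~\ref{obs:lro-split} and~\ref{obs:lro-restriciton} ``can be easily verified'' and leaves the details to the reader. Your argument supplies precisely those details: the reduction to a single-variable restriction followed by structural induction on the nested formula, with the constant-collapse cases handled by the four Boolean identities for $\orl$ and $\andl$. This is the natural route and is presumably what the authors had in mind.
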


\section{Non-nested functions with small specification number}
\label{sec:cn}


In this section, we disprove Conjecture~\ref{con:conjecture}. To this end, we show in the following theorem 
that the minimum value of the specification number is attained in the class of threshold functions not only by nested functions. 

\begin{theorem}\label{prop:1}
	For a natural number $n$, $n \geq 4$ let $f_n = f(x_1, \ldots, x_n)$ be a function defined by its DNF
	$$
		x_1 x_2 \vee x_1 x_3 \vee \dots \vee x_1x_{n-1} \vee x_2x_3\dots x_n.
	$$
	Then $f_n$ is positive, not linear read-once, threshold function, depending on all its variables,
	 and the specification number of $f_n$ is $n+1$.
\end{theorem}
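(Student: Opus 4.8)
The plan is to verify the five claimed properties of $f_n$ in turn, the first four being routine and the last—the computation of the specification number—being the heart of the matter. First I would observe that $f_n$ is positive because its DNF has no negated literals, and that it depends on all $n$ variables: dropping $x_1$ kills every term except $x_2 \cdots x_n$, so $x_1$ is relevant; dropping any $x_i$ with $2 \le i \le n-1$ destroys the term $x_1 x_i$ but leaves $x_1 x_2$ (say) intact when $i \ne 2$, and a symmetric check handles $i=2$; finally $x_n$ appears only in the long term $x_2 \cdots x_n$, and setting $x_n = 0$ removes that term, changing the value at, e.g., $(0,1,1,\ldots,1)$, so $x_n$ is relevant too. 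To see that $f_n$ is threshold I would exhibit explicit weights and threshold: something like $w_1 = n-2$, $w_i = 1$ for $2 \le i \le n-1$, $w_n = \tfrac12$, with threshold $t = n - 2 + \tfrac12 - \varepsilon$ appropriately tuned—one checks that $x_1$ together with any single $x_i$ ($2\le i\le n-1$) crosses $t$, that $x_2,\ldots,x_n$ all equal to $1$ (without $x_1$) crosses $t$, and that no false point does; alternatively one appeals to Theorem~\ref{th:asum} and rules out $k$-summability directly. That $f_n$ is not linear read-once follows from Observation~\ref{obs:lro-split}: $f_n$ is not split, since setting any single variable to $0$ does not make it identically $\vect{0}$ (the long term or a short term survives) and setting any single variable to $1$ does not make it identically $\vect{1}$ (the all-but-that-variable-zero point, or a suitable near-zero point, remains false); hence $f_n$ is not positive linear read-once, and being positive it is not linear read-once at all, in particular not nested.

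For the specification number, by Theorem~\ref{cl:ess-sigma} it suffices to determine exactly the set of essential points of $f_n$ and show it has size $n+1$. By Theorem~\ref{th:th-spec} we already have $\sigma_{\mathcal{H}_n}(f_n) \ge n+1$, so the real work is the upper bound: I would produce an explicit list $S$ of $n+1$ points and prove that any threshold function $g$ agreeing with $f_n$ on $S$ must equal $f_n$ everywhere. The natural candidate for $S$ is drawn from the extremal points: the minimal true points of $f_n$ are $\mathbf{e}_1 + \mathbf{e}_i$ for $2 \le i \le n-1$ (that is the $0/1$-vector with $1$'s exactly in coordinates $1$ and $i$) together with $(0,1,1,\ldots,1)$ (the long term), giving $n-1$ minimal ones; and the maximal false points are $(1,0,0,\ldots,0)$ and $(0,1,1,\ldots,1,0)$ together with points of the form "$x_1 = 1$, one of $x_2,\ldots,x_{n-1}$ set to $0$, $x_n$ free"—I would compute these carefully. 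Rather than taking all extremal points (which would be more than $n+1$), the key is to select a minimal specifying subset: I expect $S$ to consist of the $n-1$ minimal true points listed above plus the two maximal false points $(1,0,\ldots,0)$ and $(0,1,\ldots,1,0)$, for a total of $n+1$, and then to show the remaining extremal points are forced.

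The main obstacle, and the step I would spend the most care on, is the forcing argument: given a threshold inequality $\sum w_i x_i \le t$ consistent with $f_n$ on $S$, deduce the value of $g$ at every other point of $B^n$. From $g(\mathbf{e}_1 + \mathbf{e}_i) = 1$ one gets $w_1 + w_i > t$ for each $2 \le i \le n-1$; from $g(1,0,\ldots,0)=0$ one gets $w_1 \le t$; from $g(0,1,\ldots,1)=1$ one gets $w_2 + \cdots + w_n > t$; from $g(0,1,\ldots,1,0)=0$ one gets $w_2 + \cdots + w_{n-1} \le t$. Combining these (and noting $t \ge w_1 \ge 0$ forces $t \ge 0$, so the origin is false under $g$, etc.) should pin down the sign pattern and enough inequalities to conclude that every true point of $f_n$ lies strictly above $t$ under $g$ and every false point lies weakly below—in particular one must check the "hard" false points such as $(1,0,\ldots,0,1)$ (which is false for $f_n$ since $x_1 x_n$ is not a term) and verify $w_1 + w_n \le t$ is forced, and the "hard" true points such as $(1,1,\ldots,1,0)$. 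I would organize this as a short case analysis on which coordinates of a test point are $1$, using the derived inequalities as a linear system; the delicate point is ensuring the chosen $S$ really does force $w_1 + w_n \le t$ and the analogous constraints, which is exactly why the specific two maximal false points were included. Finally, I would remark that $S$ is a specifying set of size $n+1$, so together with Theorem~\ref{th:th-spec} the specification number is exactly $n+1$, completing the proof and the refutation of Conjecture~\ref{con:conjecture}.
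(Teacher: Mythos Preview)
Your treatment of positivity, relevance of all variables, the threshold representation, and non-splitness is fine and matches the paper. The trouble is entirely in the specification-number step, where your tentative list of maximal zeros is wrong and, as a consequence, your candidate set $S$ fails to specify $f_n$.

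Concretely: $(1,0,\ldots,0)$ is \emph{not} a maximal zero of $f_n$, since $(1,0,\ldots,0,1)$ lies strictly above it and is still false (neither a short term $x_1x_i$ with $2\le i\le n-1$ nor the long term $x_2\cdots x_n$ fires). In fact the maximal zeros, read off from the CNF $(x_1\vee x_2)\cdots(x_1\vee x_n)(x_2\vee\cdots\vee x_{n-1})$, are the $n$ points $\vect{y}_i=(0,\ldots,0,\ldots,1)$ with zeros exactly in coordinates $1$ and $i+1$ for $i=1,\ldots,n-2$, together with $\vect{z}_1=(0,1,\ldots,1,0)$ and $\vect{z}_2=(1,0,\ldots,0,1)$. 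Your $S$ replaces $\vect{z}_2$ by the non-maximal $(1,0,\ldots,0)$, and this is fatal: $\vect{z}_2$ turns out to be essential, so there is a threshold function agreeing with $f_n$ on all of $B^n\setminus\{\vect{z}_2\}$---in particular on your $S$---yet differing from $f_n$ at $\vect{z}_2$. Your own ``delicate point'' is exactly where the forcing argument breaks: from $w_1\le t$, $w_2+\cdots+w_{n-1}\le t$, $w_1+w_i>t$, and $w_2+\cdots+w_n>t$ you cannot deduce $w_1+w_n\le t$.

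The paper avoids an inequality-chasing argument altogether. It first lists all $2n-1$ extremal points (the $n-1$ minimal ones $\vect{x}_i$ and the $n$ maximal zeros $\vect{y}_1,\ldots,\vect{y}_{n-2},\vect{z}_1,\vect{z}_2$), then shows that each $\vect{y}_i$ is \emph{non}-essential: if some threshold $g$ differed from $f_n$ only at $\vect{y}_i$, then $\vect{x}_i+\vect{y}_i=\vect{z}_1+\vect{z}_2$ would make $g$ $2$-summable, contradicting Theorem~\ref{th:asum}. Since essential points lie among extremal points and there must be at least $n+1$ of them by Theorem~\ref{th:th-spec}, the remaining $n+1$ extremal points $\vect{x}_1,\ldots,\vect{x}_{n-1},\vect{z}_1,\vect{z}_2$ are exactly the essential ones, and $\sigma_{\mathcal H_n}(f_n)=n+1$ by Theorem~\ref{cl:ess-sigma}. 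If you want to repair your approach, take $S=\{\vect{x}_1,\ldots,\vect{x}_{n-1},\vect{z}_1,\vect{z}_2\}$ and carry out the forcing; but the asummability trick is both shorter and more robust.
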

\begin{proof}
	Clearly, $f_n$ depends on all its variables.
	Furthermore, $f_n$ is positive, since its DNF contains no negation of a variable.
	Also, it is easy to verify that $f$ is not split, and therefore by Observation~\ref{obs:lro-split}  $f$ is not linear read-once.
	
	Now, we claim that the CNF of $f_n$ is
	$$
		(x_1 \vee x_2)(x_1 \vee x_3)\dots (x_1 \vee x_n)(x_2 \vee x_3 \vee \dots \vee x_{n-1}).
	$$
	Indeed, the equivalence of the DNF and CNF can be directly checked by expanding the latter
	and applying the absorption law:
	\begin{equation*}
		\begin{split}
		       & (x_1 \vee x_2)(x_1 \vee x_3)\dots (x_1 \vee x_n)(x_2 \vee x_3 \vee \dots \vee x_{n-1}) \\
			& = (x_1 \vee x_2x_3\dots x_n)(x_2 \vee x_3 \vee \dots \vee x_{n-1}) \\
			& = x_1 x_2 \vee x_1 x_3 \vee \dots \vee x_1x_{n-1} \vee x_2x_3\dots x_n.
		\end{split}
	\end{equation*}
	
	\noindent
	From the DNF and the CNF of $f_n$ we retrieve the minimal ones
	$$
	\begin{array}{r}
		\vect{x}_1 = (1,1,0,\dots,0,0),\\
		\vect{x}_2 = (1,0,1,\dots,0,0),\\
		\dotfill                         \\
		\vect{x}_{n-2} = (1,0,0,\dots,1,0),\\
		\vect{x}_{n-1} = (0,1,1,\dots,1,1)\phantom{,}\\
	\end{array}	
	$$
	and maximal zeros of $f_n$
	$$
	\begin{array}{r}
    		\vect{y}_1 = (0,0,1,\dots,1,1),\\
    		\vect{y}_2 = (0,1,0,\dots,1,1),\\
		\dotfill                             \\
		\vect{y}_{n-2} = (0,1,1,\dots,0,1),\\
		\vect{z}_1 = (0,1,1,\dots,1,0),\\
		\vect{z}_2 = (1,0,0,\dots,0,1),\\
	\end{array}
	$$
	respectively (see Theorems 1.26, 1.27 in \cite{CH11}).
	It is easy to check that all minimal ones $\vect{x}_1,\vect{x}_2,\dots,\vect{x}_{n-1}$ 
	satisfy the equation 
	$$
		(2n-5)x_1 + 2(x_2+x_3+\dots+x_{n-1})+x_n = 2n-3,
	$$
	and all maximal zeros $\vect{y}_1, \vect{y}_2, \dots, \vect{y}_{n-2}, \vect{z}_1, \vect{z}_2$
	satisfy the inequality
	$$
		(2n-5)x_1 + 2(x_2+x_3+\dots+x_{n-1})+x_n \leq 2n-4.
	$$
	Hence the latter is a threshold inequality representing $f_n$.
	
	Since for a positive threshold function $f$ which depends on all its variables the set of
 	extremal points specifies $f$, and every essential point of $f$ must belong to each specifying set,
 	we conclude that every essential point of $f_n$ is extremal.
	
	
	Let us show that the points $\vect{y}_1, \vect{y}_2, \dots, \vect{y}_{n-2}$ are not essential 
	for $f_n$. Suppose to the contrary that there exists a threshold function $g_i$ that differs from $f_n$ only 
	in the point $\vect{y}_i$, $i \in [n-2]$, i.e., $g_i(\vect{y}_i)=1$ and $g_i(\vect{x})=f_n(\vect{x})$ 
	for every $\vect{x}\ne \vect{y}_i$.
	Then $\vect{x}_i + \vect{y}_i = \vect{z}_1 + \vect{z}_2$, and hence $g_i$ is 2-summable. 
	Therefore by Theorem~\ref{th:asum} function $g_i$ is not threshold. A contradiction.

	The above discussion together with Theorems \ref{th:th-spec} and \ref{cl:ess-sigma} imply that all
	the remaining $n+1$ extremal points $\vect{x}_1,\vect{x}_2,\dots,\vect{x}_{n-1},\vect{z}_1,\vect{z}_2$
	are essential, and therefore $\sigma_{\mathcal{H}_n}(f_n) = n+1$.
\end{proof}

\section{Extremal points of a threshold function}
\label{sec:ex}

The main goal of this section is to prove the following theorem.
\begin{theorem}\label{th:extremal_main}
	Let $f = f(x_1, \ldots, x_n)$ be a positive threshold function with $k \geq 0$ relevant variables. 
	Then the number of extremal points of $f$ is at least $k+1$. Moreover $f$
	has exactly $k+1$ extremal points if and only if $f$ is linear read-once.
\end{theorem}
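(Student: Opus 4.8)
### Proof proposal for Theorem~\ref{th:extremal_main}

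\textbf{Setup and reduction to the depending case.}
The plan is to induct on $n$, the ambient number of variables, reducing first to the case where $f$ depends on all of them. If $x_i$ is irrelevant, then $f_{|x_i=0}\equiv f_{|x_i=1}$, and every extremal point of $f$ has a unique "partner" obtained by flipping the $i$-th coordinate; the extremal points of $f$ are in $2$-to-$1$ correspondence with the extremal points of the $(n-1)$-variable function $f' = f_{|x_i=0}$, which has the same $k$ relevant variables. Moreover, $f$ is linear read-once if and only if $f'$ is (one direction is Observation~\ref{obs:lro-restriciton}; the other is immediate since we may reintroduce $x_i$ as an irrelevant variable without leaving the linear read-once class). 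Wait --- this correspondence is not quite $2$-to-$1$ on the nose, since a maximal zero of $f'$ lifts to the maximal zero of $f$ obtained by setting $x_i=1$, and a minimal one of $f'$ lifts by setting $x_i=0$; in either case exactly one lift is extremal for $f$. So in fact $r(f) = r(f')$, and it suffices to prove the theorem when $k = n$, i.e. $f$ is a positive threshold function depending on all its variables, for which we must show $r(f)\ge n+1$ with equality iff $f$ is nested.

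\textbf{The lower bound $r(f)\ge n+1$.}
Here I would invoke the machinery already in the paper: by Theorem~\ref{cl:ess-sigma} and Theorem~\ref{th:th-spec}, $f$ has at least $n+1$ essential points, and since (as recalled in Section~\ref{sec:ex}) the extremal points of a positive threshold function depending on all its variables form a specifying set, every essential point is extremal. Hence $r(f)\ge n+1$. This takes care of the inequality for free.

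\textbf{The equality case: revealing the structure of $Z^f\cup U^f$.}
The substantive part is showing that $r(f)=n+1$ forces $f$ to be nested. I expect this to be the main obstacle, and I would attack it by induction on $n$ using the split operation. First, one shows that if $f$ is a positive threshold function depending on all $n$ variables with $r(f)=n+1$, then $f$ must be split: the idea is that if $f$ were not split, then for every variable $x_i$ both restrictions $f_{|x_i=0}$ and $f_{|x_i=1}$ are nonconstant, and one builds a counting argument showing that the extremal points of $f$ fibered over the two restrictions are too numerous --- roughly, $r(f) \ge r(f_{|x_i=0}) + 1$ and $r(f)\ge r(f_{|x_i=1})+1$ cannot both be tight simultaneously across all $i$ unless some collapse happens, and that collapse is exactly splitness. (This is where the "underlying structure" alluded to in the abstract does the work; I would look for an identity expressing $Z^f\cup U^f$ in terms of the extremal points of the two restrictions plus a controlled number of "new" extremal points created at the splitting coordinate.) Once $f$ is known to be split, say $f_{|x_1=1}\equiv\vect{1}$ (the case $f_{|x_1=0}\equiv\vect{0}$ is dual), then $f = x_1 \andl g$ or $f = x_1 \orl g$ for a suitable positive threshold function $g = f_{|x_1=0}$ (resp.\ $f_{|x_1=1}$) on the remaining variables; $g$ depends on all of $x_2,\dots,x_n$ and one checks $r(g) = r(f) - 1 = n$, so by the induction hypothesis $g$ is nested, and prepending $x_1$ via $\andl$ or $\orl$ keeps it nested. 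The base case $n=1$ is trivial ($f=x_1$, two extremal points, nested). The delicate points to nail down are: (i) the exact bookkeeping in "$f$ not split $\Rightarrow r(f)\ge n+2$", which requires understanding how maximal zeros and minimal ones of $f$ project to, and lift from, the two hyperplane sections $x_i=0$ and $x_i=1$; and (ii) verifying that a restriction of a positive threshold function used in the induction is again positive threshold with the claimed number of relevant variables, which is routine but must be stated.
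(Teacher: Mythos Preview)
Your reduction to the case $k=n$ is clean and correct, and your lower-bound argument via essential points is a genuine shortcut the paper does not take: since the extremal points specify $f$ (for a positive threshold function depending on all its variables), every essential point is extremal, and Theorems~\ref{th:th-spec} and~\ref{cl:ess-sigma} give at least $n+1$ essential points, hence $r(f)\ge n+1$. The paper instead obtains the lower bound as a byproduct of the same induction that handles the equality case, so your route is more economical for this half.

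For the equality direction your outline---show that a non-split positive threshold function has $r(f)\ge n+2$, then peel off a variable via $\wedge$ or $\vee$ and induct---is exactly the paper's strategy, and your verification of the split step (that $r(f)=r(g)+1$ and $g$ inherits all remaining variables as relevant) matches Lemma~\ref{lem:split}. The gap is precisely where you flag it: the implication ``non-split $\Rightarrow r(f)\ge n+2$''. Your gesture toward ``$r(f)\ge r(f_{|x_i=0})+1$ and $r(f)\ge r(f_{|x_i=1})+1$ cannot both be tight'' does not go through as stated, and the paper's argument here is substantially more delicate than a straight fibering count. It splits non-split $f$ into two sub-cases. In the first (some $x_i$ has both restrictions split), a threshold-specific argument using $2$-summability (Claim~\ref{cl:com_var}) locates a common splitting variable $x_s$ for $f_0$ and $f_1$, which forces enough disjointness among lifted extremal points to get $r(f)\ge r(f_0)+r(f_1)\ge k+2$. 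In the second (for every $x_i$ at least one restriction is non-split), the paper introduces the \emph{extremal graph}: vertices are chosen $x_i$-extremal pairs, edges join each pair, and the key Lemma~\ref{lem:acyclic} shows this graph is acyclic because a cycle would produce a summability certificate contradicting thresholdness. Acyclicity then bounds from below the number of ``new'' extremal points appearing in the identity $r(f)=r(f_0)+|C_1^*|+|P_0^*\setminus N_0^*|$ (equation~\eqref{eq:rf}), yielding $r(f)\ge k+2$. This extremal-graph/asummability mechanism is the ``underlying structure'' promised in the abstract, and it is the missing ingredient in your sketch; a purely combinatorial count over the two hyperplane sections, without invoking thresholdness again at this stage, does not seem to suffice.
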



We will prove Theorem~\ref{th:extremal_main} by induction on $n$. The statement
is easily verifiable for $n=1$. Let $n > 1$ and assume that the theorem is true for
functions of at most $n-1$ variables. In the rest of the section we prove
the statement for $n$-variable functions. Our strategy consists of three major steps.
First, we prove the statement for split functions in Section~\ref{sec:split}. 
This case includes linear read-once functions.
Then, in Section~\ref{sec:ns1}, we prove the result 
for non-split functions $f$ which have a variable $x_i$ such that both restrictions 
$f_{|x_i=0}$ and $f_{|x_i=1}$ are split. Finally, in Section~\ref{sec:ns2}, we consider the case of non-split
functions $f$, where for every variable $x_i$ of $f$ at least one of the restrictions 
$f_{|x_i=0}$ and $f_{|x_i=1}$ is non-split. 
In this case, the proof is based on a structural characterization of the set 
of extremal points, which is of independent interest and which is presented in Section~\ref{sec:str}.


\subsection{The structure of the set of extremal points}
\label{sec:str}

We say that a maximal zero (\textit{resp.} minimal one) $\vect{y}$ of $f(x_1, \ldots, x_n)$
\textit{corresponds to a variable} $x_i$ if $(\vect{y})_i = 0$ (\textit{resp.} $(\vect{y})_i = 1$).
%
A pair $(\vect{a}, \vect{b})$ of points in $B^n$ is called \textit{$x_i$-extremal for $f$} if
\begin{enumerate}
	\item $\vect{a}$ is a maximal zero of $f$ corresponding to $x_i$;
	\item $\vect{b}$ is a minimal one of $f$ corresponding to $x_i$; and
	\item $(\vect{a})_j \geq (\vect{b})_j$ for every $j \in [n] \setminus \{ i \}$.
\end{enumerate}

\begin{claim}\label{cl:extremal_pair}
	Let $f$ be a positive function and $i \in [n]$.
	Then 
	\begin{enumerate}
		\item for every maximal zero $\vect{\textup{a}}$ of $f$ corresponding to $x_i$ there exists 
		a minimal one $\vect{\textup{b}}$ of $f$ corresponding to $x_i$
		such that $(\vect{\textup{a}}, \vect{\textup{b}})$ is an $x_i$-extremal pair for $f$;
		\item for every minimal one $\vect{\textup{b}}$ of $f$ corresponding to $x_i$ there exists 
		a maximal zero $\vect{\textup{a}}$ of $f$ corresponding to $x_i$
		such that $(\vect{\textup{a}}, \vect{\textup{b}})$ is an $x_i$-extremal pair for $f$.
	\end{enumerate}
\end{claim}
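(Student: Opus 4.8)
The plan is to prove both parts of the claim by a single elementary ``flip one coordinate'' argument that uses only the positivity of $f$; the threshold property plays no role here. I would prove part~1 in detail and obtain part~2 as its order-reversing mirror image.

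For part~1, start from a maximal zero $\vect{a}$ of $f$ with $(\vect{a})_i = 0$. Let $\vect{a}^*$ be the point obtained from $\vect{a}$ by changing its $i$-th coordinate to $1$; then $\vect{a} \sbelow \vect{a}^*$, and since $\vect{a}$ is a \emph{maximal} zero we get $f(\vect{a}^*) = 1$. Hence $\vect{a}^*$ lies above some minimal one $\vect{b}$, i.e.\ $\vect{b} \below \vect{a}^*$, and I claim this $\vect{b}$ is the point we want. First, $(\vect{b})_i = 1$: if $(\vect{b})_i = 0$, then since $\vect{a}$ and $\vect{a}^*$ agree in every coordinate other than $i$, the relation $\vect{b} \below \vect{a}^*$ gives $\vect{b} \below \vect{a}$, and positivity together with $f(\vect{a}) = 0$ would force $f(\vect{b}) = 0$, contradicting that $\vect{b}$ is a true point. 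So $\vect{b}$ is a minimal one corresponding to $x_i$. Second, for every $j \in [n] \setminus \{ i \}$ we have $(\vect{b})_j \le (\vect{a}^*)_j = (\vect{a})_j$, which is exactly condition~3 in the definition of an $x_i$-extremal pair. Therefore $(\vect{a}, \vect{b})$ is $x_i$-extremal.

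Part~2 is completely symmetric under the order reversal $\vect{x} \mapsto \notl{\vect{x}}$, which swaps the roles of zeros and ones: given a minimal one $\vect{b}$ with $(\vect{b})_i = 1$, change its $i$-th coordinate to $0$ to get a point $\vect{b}^* \sbelow \vect{b}$, which is a false point because $\vect{b}$ is a minimal one; take any maximal zero $\vect{a}$ with $\vect{b}^* \below \vect{a}$, and verify as above that $(\vect{a})_i = 0$ (otherwise $\vect{b} \below \vect{a}$ and positivity forces $f(\vect{a}) = 1$) and that $(\vect{a})_j \ge (\vect{b}^*)_j = (\vect{b})_j$ for all $j \ne i$.

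I do not expect a real obstacle here; the statement is a structural warm-up for the harder cases later in the section. The only step that needs a moment's care is checking that the extremal point produced by the single flip still ``corresponds to $x_i$'' — i.e.\ that $(\vect{b})_i = 1$ in part~1 and $(\vect{a})_i = 0$ in part~2 — and this is precisely where positivity and the maximality/minimality of the starting point are used.
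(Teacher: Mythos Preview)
Your proof is correct and essentially identical to the paper's: both flip the $i$-th coordinate of the given extremal point, use maximality/minimality to see the flipped point lies on the other side, pick an extremal point $\vect{b}$ below (resp.\ above) it, and then use positivity to force $(\vect{b})_i=1$ (resp.\ $(\vect{a})_i=0$). The only cosmetic difference is notation ($\vect{a}^*$ versus the paper's $\vect{b}'$) and that you spell out part~2 explicitly whereas the paper just says ``similarly''.
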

\begin{proof}
We prove the first part of the claim, the second part can be proved similarly.
Consider a maximal zero $\vect{a}$ of $f$ corresponding to $x_i$ and the vector $\vect{b}'$ such that $(\vect{a})_j=(\vect{b}')_j$ for all $j \neq i$ and $(\vect{b}')_i=1$.
Since $\vect{a} \sbelow \vect{b}'$ and $\vect{a}$ is a maximal zero, we have $f(\vect{b}') = 1$.
Let $\vect{b}$ be a minimal one of $f$ such that $\vect{b} \below \vect{b}'$. 
Then $(\vect{b})_i=1$ for otherwise $\vect{b}$ would be below $\vect{a}$, which in turn would contradict
positivity of $f$. 
Now since $\vect{a}$ and $\vect{b}'$ differ only in coordinate $i$ and $\vect{b} \below \vect{b}'$, we conclude that $(\vect{a})_j \geq (\vect{b})_j$ for every $j \in [n] \setminus \{ i \}$, and therefore 
$(\vect{a},\vect{b})$ is an $x_i$-extremal pair for $f$.
\end{proof}

Let $g=g(y_1, \ldots, y_n)$ be a positive function, and let $\{ y_{i_1}, \ldots, y_{i_k} \}$
be a subset of the relevant variables of $g$. For every variable $y_{i_j}$, $j \in [k]$ we fix 
an $y_{i_j}$-extremal pair $(\vect{a}_{i_j}, \vect{b}_{i_j})$. Now we define a graph 
$H(g, y_{i_1}, \ldots, y_{i_k})$ as an undirected graph with vertex set 
$\{ \vect{a}_{i_j}, \vect{b}_{i_j}~|~j \in [k] \}$ and edge set $\{ \{\vect{a}_{i_j}, \vect{b}_{i_j}\}~|~j \in [k] \}$. 
We call $H(g, y_{i_1}, \ldots, y_{i_k})$ an extremal graph and observe that this graph is defined not uniquely.

\begin{lemma}\label{lem:acyclic}
	If $g$ is a threshold function, then $H = H(g, y_{i_1}, \ldots, y_{i_k})$ is an acyclic graph. 
\end{lemma}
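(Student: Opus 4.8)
The plan is to argue by contradiction: suppose $H = H(g, y_{i_1}, \ldots, y_{i_k})$ contains a cycle, and use it to produce a $2$-summability witness (in fact an $r$-summability witness for $r$ equal to the cycle length), contradicting Theorem~\ref{th:asum}. The key point is that every edge $\{\vect{a}_{i_j}, \vect{b}_{i_j}\}$ of $H$ joins a false point to a true point, and the structural conditions in the definition of an $x_i$-extremal pair will let me balance these false and true points coordinatewise around a cycle.

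First I would fix notation for a shortest cycle in $H$. Since each edge of $H$ corresponds to a distinct variable $y_{i_j}$ and joins $\vect{a}_{i_j}$ (a maximal zero with $i_j$-th coordinate $0$) to $\vect{b}_{i_j}$ (a minimal one with $i_j$-th coordinate $1$), a cycle is an alternating sequence of maximal zeros and minimal ones; in particular any cycle has even length, say $2m$, and can be written as $\vect{a}_{j_1}, \vect{b}_{j_1} = \vect{a}_{j_2}$ (after relabelling so consecutive edges share a vertex), etc. The cleaner way to set this up: a cycle uses edges for variables $y_{c_1}, \ldots, y_{c_{2m}}$ (indices into the chosen pairs), where consecutive edges share a common endpoint. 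Because the two endpoints of edge $y_{c_t}$ have opposite values in coordinate $c_t$ (one is $0$, one is $1$) while no other edge mentions coordinate $c_t$, a shared vertex is simultaneously the "$\vect{b}$-end" of one edge and the "$\vect{a}$-end" of the next.

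The heart of the argument is the summation identity. Summing all false points (the maximal-zero endpoints) around the cycle and all true points (the minimal-one endpoints) around the cycle, I want to show these two sums in $\mathbb{R}^n$ are equal. Coordinate by coordinate: for a coordinate $c_t$ used by an edge of the cycle, exactly one of the two $\vect{a}$-endpoints incident to $c_t$-edges... — here is where I must be careful. Let me instead use the third condition of $x_i$-extremality directly. Write the cycle vertices as $\vect{v}_1, \vect{v}_2, \ldots, \vect{v}_{2m}, \vect{v}_{2m+1} = \vect{v}_1$, alternating false/true, with edge $t$ joining $\vect{v}_t$ and $\vect{v}_{t+1}$ and carrying variable $y_{e_t}$. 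Say the false points are $\vect{v}_1, \vect{v}_3, \ldots$ and the true points are $\vect{v}_2, \vect{v}_4, \ldots$. The claim to verify is $\vect{v}_1 + \vect{v}_3 + \cdots + \vect{v}_{2m-1} = \vect{v}_2 + \vect{v}_4 + \cdots + \vect{v}_{2m}$. Fix a coordinate $\ell$. If $\ell \notin \{e_1, \ldots, e_{2m}\}$, then every edge's $x_{e_t}$-extremality condition (part 3, applied with $j = \ell$) forces the $\vect{a}$-endpoint and $\vect{b}$-endpoint of that edge to satisfy $(\vect{a})_\ell \geq (\vect{b})_\ell$; but this holds for \emph{both} orientations only if... hmm, actually part 3 says $(\vect{a})_j \ge (\vect{b})_j$, i.e. the false endpoint dominates the true endpoint in coordinate $\ell$. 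Traversing the cycle, the false and true roles alternate, so the values $(\vect{v}_1)_\ell \ge (\vect{v}_2)_\ell$, $(\vect{v}_3)_\ell \ge (\vect{v}_2)_\ell$ (edge $2$ has false endpoint $\vect{v}_3$), $(\vect{v}_3)_\ell \ge (\vect{v}_4)_\ell$, and so on — giving a chain of inequalities around the cycle that closes up, forcing all $(\vect{v}_t)_\ell$ equal for $\ell \notin \{e_1,\dots,e_{2m}\}$. For $\ell = e_s$ for some unique $s$: coordinate $e_s$ is $0$ at one endpoint of edge $s$ and $1$ at the other, and by the same extremality inequalities for the \emph{other} edges it equals a fixed value at all vertices except possibly the two endpoints of edge $s$, where it is $0$ (false side) and $1$ (true side). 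I then just need that this common value is consistent, and the net contribution of coordinate $e_s$ to (sum of false) $-$ (sum of true) is zero: the one "$0$ vs $1$" discrepancy on edge $s$ contributes $-1$ if the extra false vertex... — I will organize this as: for each used coordinate $e_s$, among cycle vertices the false ones and the true ones take the same multiset of values, because the sole asymmetric edge contributes $0$ on the false side and $1$ on the true side while that coordinate's forced common value cancels pairwise. Hence the two sums coincide, $g$ is $2m$-summable, contradicting asummability (Theorem~\ref{th:asum}). The main obstacle is exactly this bookkeeping on the used coordinates $e_s$: making precise why each edge's variable contributes nothing to the difference of the two sums, which relies on the fact that no two edges of $H$ share the same variable (so the "flip" in coordinate $e_s$ happens on exactly one edge) together with condition 3 pinning down all other coordinates along the cycle.
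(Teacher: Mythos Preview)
Your overall strategy---assume a cycle, produce an $r$-summability witness, contradict Theorem~\ref{th:asum}---is exactly the paper's. The gap is in your coordinate bookkeeping. For a coordinate $\ell \notin \{e_1,\dots,e_{2m}\}$ you claim the inequalities ``close up'' and force all $(\vect{v}_t)_\ell$ equal. They do not: condition~3 of an extremal pair always points from the false endpoint to the true endpoint, so around the cycle you get
\[
(\vect{v}_1)_\ell \ge (\vect{v}_2)_\ell \le (\vect{v}_3)_\ell \ge (\vect{v}_4)_\ell \le \cdots \le (\vect{v}_{2m-1})_\ell \ge (\vect{v}_{2m})_\ell \le (\vect{v}_1)_\ell,
\]
an alternating chain, not a cyclic chain of the same direction. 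The assignment $(\vect{v}_t)_\ell = 1$ for $t$ odd and $(\vect{v}_t)_\ell = 0$ for $t$ even is perfectly consistent with all these constraints, so the sums in coordinate $\ell$ need not agree. The same issue contaminates your analysis of the used coordinates $e_s$, which also relied on the ``fixed common value'' claim.

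What the inequalities \emph{do} give you is only an inequality on the sums: in each coordinate, the false points collectively carry at least as many $1$'s as the true points (your alternating chain shows each true vertex is $\le$ both its neighbours, so the number of true vertices with a $1$ cannot exceed the number of false vertices with a $1$; the paper phrases this as a counting argument on edges between $Q_1^i$ and $R_0^i$). To turn this inequality into the needed equality, the paper uses positivity of $g$: wherever the false side has a surplus of $1$'s in some coordinate, flip enough $0$'s to $1$'s among the true points in that coordinate---the resulting points are still true points of $g$ because $g$ is positive. After this modification the coordinate sums match exactly and $r$-summability follows. Your write-up is missing precisely this ``flip up using positivity'' step; without it the argument does not go through.
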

\begin{proof}
It follows from the definitions of an $x_i$-extremal pair and of an extremal graph that
$H$ does not have multiple edges and that $H$ is a bipartite graph with parts $A = \{ \vect{a}_{i_j}~|~j \in [k] \}$
and $B = \{ \vect{b}_{i_j}~|~j \in [k] \}$.
Suppose to the contrary that $H$ has a cycle of length $2r$, for some $r \in \{ 2, \ldots, k \}$. 
Let $R$ and $Q$ be the sets of vertices of the cycle belonging to $A$ and $B$, respectively.
For $i \in [n]$ and $\alpha \in \{ 0, 1 \}$ we denote by $R_{\alpha}^i$ the set of vertices $\vect{y} \in R$
with $(\vect{y})_i = \alpha$. Similarly, $Q_{\alpha}^i$ denotes the set of vertices 
$\vect{y} \in Q$ with $(\vect{y})_i = \alpha$.

Fix an index $i \in [n]$. By definition of an $x_i$-extremal pair and of an extremal graph, there is at most one edge between the vertices of $Q_{1}^i$
and the vertices of $R_{0}^i$. Therefore, the number $2|Q_{1}^i|$ of the edges in the cycle incident to the vertices in $Q_{1}^i$ is at most one
more than the number $2|R_{1}^i|$ of the edges incident to the vertices in $R_{1}^i$. This implies that
$|Q_{1}^i| \leq |R_{1}^i|$. If this inequality is strict, we modify the set $Q$ by choosing 
arbitrarily $|R_{1}^{i}| - |Q_{1}^{i}|$ points in $Q_{0}^i$ and changing their $i$-th coordinates
from $0$ to $1$. Since $g$ is positive, the modified points remain true points for $g$. 

Applying this procedure for each $i \in [n]$, we obtain the set $R$ of false points and the set $Q$ of 
true points both of size $r$
such that $|Q_{1}^i| = |R_{1}^i|$ for all $i$. Therefore, 
$\sum_{\vect{x} \in R} \vect{x} = \sum_{\vect{y} \in Q} \vect{y}$, showing that
$g$ is $k$-summable. Hence, by Theorem \ref{th:asum}, $g$ is not threshold, which contradicts the assumption of the lemma.
%
%
\end{proof}



\subsection{Split functions}
\label{sec:split}

\begin{lemma}\label{lem:split}
	Let $f = f(x_1, \ldots, x_n)$ be a positive threshold split function 
	with $k \geq 0$ relevant variables. 
	Then the number of extremal points of $f$ is at least $k+1$. Moreover $f$
	has exactly $k+1$ extremal points if and only if $f$ is linear read-once.
\end{lemma}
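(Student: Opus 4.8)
The plan is to argue inside the inductive proof of Theorem~\ref{th:extremal_main}: we may assume the theorem already holds for all positive threshold functions of at most $n-1$ variables, and we prove the lemma for an $n$-variable $f$. First I would dispose of the two constant functions: if $f\equiv\vect{0}$ or $f\equiv\vect{1}$, then $k=0$, the function has a single extremal point (the all-ones point, respectively the all-zeros point), and it is linear read-once by definition. So assume $f$ is non-constant. If $f$ is split because $f_{|x_i=1}\equiv\vect{1}$ for some $i$, pass to the dual function $f^{d}(\vect{x}):=\notl{f(\notl{\vect{x}})}$: it is again a positive threshold function, it has the same relevant variables and (since $\vect{x}\mapsto\notl{\vect{x}}$ interchanges maximal zeros and minimal ones) the same number of extremal points as $f$, it is linear read-once exactly when $f$ is (apply De~Morgan's laws to a nested formula), and it satisfies $(f^{d})_{|x_i=0}\equiv\vect{0}$. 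Hence it suffices to handle the case $f_{|x_i=0}\equiv\vect{0}$.

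In this case $x_i$ is relevant (otherwise $f=f_{|x_i=1}=f_{|x_i=0}\equiv\vect{0}$), and $f(\vect{x})=1$ if and only if $(\vect{x})_i=1$ and $g(\vect{x}')=1$, where $g:=f_{|x_i=1}$ is regarded as a positive threshold function of the $n-1$ variables other than $x_i$ (as a restriction of a threshold function it is threshold) and $\vect{x}'$ denotes the corresponding restriction of $\vect{x}$. One checks that the relevant variables of $g$ are exactly those of $f$ apart from $x_i$, so $g$ has $k-1$ relevant variables. The core of the argument is the following correspondence of extremal points, valid for any positive $f$ of the form $x_i\andl g$: lifting a point of $B^{n-1}$ by setting coordinate $i$ to $1$ is a bijection $U^{g}\to U^{f}$; and $Z^{f}$ consists precisely of the point $\vect{z}_{0}$ with coordinate $i$ equal to $0$ and all other coordinates equal to $1$, together with the lifts of the points of $Z^{g}$. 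Consequently $|U^{f}|=|U^{g}|$ and $|Z^{f}|=|Z^{g}|+1$, hence $r(f)=r(g)+1$; the degenerate subcase $g\equiv\vect{1}$ (that is, $f=x_i$) is consistent with this, since then $U^{g}=\{\vect{0}\}$, $Z^{g}=\emptyset$, and $r(f)=2=k+1$.

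Now apply the induction hypothesis to $g$: $r(g)\ge (k-1)+1=k$, with equality if and only if $g$ is linear read-once. Therefore $r(f)=r(g)+1\ge k+1$, with equality if and only if $g$ is linear read-once, and it remains to note that $f=x_i\andl g$ is linear read-once if and only if $g$ is. The ``only if'' direction is Observation~\ref{obs:lro-restriciton}, because $g=f_{|x_i=1}$ is a restriction of $f$. For the ``if'' direction, $g$ does not depend on $x_i$, so either $g$ is constant, in which case $f=x_i$ is a nested formula, or $g$ has a negation-free nested formula $t$ over variables other than $x_i$, and then $x_i\andl t$ is a nested formula for $f$. This completes the proof. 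The only point requiring care is the bookkeeping of extremal points under the decomposition $f=x_i\andl g$ — notably the extra maximal zero $\vect{z}_{0}$ created by the conjunction and the degenerate case $g\equiv\vect{1}$; the rest is routine, and the dualization reduces the ``$\vee$'' case to the ``$\wedge$'' case with no extra work.
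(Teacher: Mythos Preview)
Your proof is correct and follows essentially the same route as the paper's: reduce to the case $f_{|x_i=0}\equiv\vect{0}$, observe that $r(f)=r(f_{|x_i=1})+1$ via the one extra maximal zero $\vect{z}_0$, and finish by the induction hypothesis together with Observation~\ref{obs:lro-restriciton}. The only cosmetic difference is that you handle the case $f_{|x_i=1}\equiv\vect{1}$ by passing to the dual, whereas the paper simply declares it ``similar''; your treatment of the degenerate subcase $g\equiv\vect{1}$ is also slightly more explicit than the paper's.
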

\begin{proof}
	The case $k=0$ is trivial, and therefore we assume that $k \geq 1$.

	Let $x_i$ be a variable of $f$ such that $f_{|x_i = 0} \equiv \vect{0}$
	(the case $f_{|x_i = 1} \equiv \vect{1}$ is similar). Let $f_0 = f_{|x_i = 0}$ and $f_1 = f_{|x_i = 1}$.
	Clearly, $x_i$ is a relevant variable of $f$, otherwise $f \equiv \vect{0}$, that is, $k=0$.
	Since every relevant variable of $f$ is relevant for at least one of the functions
	$f_0$ and $f_1$, we conclude that $f_1$ has $k-1$ relevant variables.
	
	The equivalence $f_0 \equiv \vect{0}$ implies that for every extremal point 
	$(\alpha_1, \ldots, \alpha_{i-1}, \alpha_{i+1}, \ldots, \alpha_n)$ of $f_1$, the corresponding
	point $(\alpha_1, \ldots, \alpha_{i-1}, 1, \alpha_{i+1}, \ldots, \alpha_n)$ is extremal for $f$.
	For the same reason, there is only one extremal point of $f$ with the $i$-th coordinate being
	equal to zero, namely, the point with all coordinates equal to one, except for the $i$-th coordinate.
	Hence, $r(f) = r(f_1) + 1$. 
	\begin{enumerate}
		\item If $f_1$ is linear read-once, then $f$ is also linear read-once, since 
		$f$ can be expressed as $x_i \andl f_1$. 
		By the induction hypothesis $r(f_1) = k$ and therefore $r(f) = k+1$.
		
		\item If $f_1$ is not linear read-once, then from Observation \ref{obs:lro-restriciton}
		we conclude that $f$ is also not linear read-once.
		By the induction hypothesis $r(f_1) > k$ and therefore $r(f) > k+1$.
	\end{enumerate}
\end{proof}

\subsection{Non-split functions with split restrictions}
\label{sec:ns1}

\begin{claim}\label{cl:com_var}
	Let $f = f(x_1, \ldots, x_n)$ be a positive threshold non-split function. 
	If there exists $i \in [n]$ such that both $f_0 = f_{|x_i=0}$ and $f_1 = f_{|x_i=1}$ are split,
	then there exists $s \in [n] \setminus \{ i \}$ such that
	${f_0}_{|x_s = 0} \equiv \vect{\textup{0}}$ and ${f_1}_{|x_s = 1} \equiv \vect{\textup{1}}$.
\end{claim}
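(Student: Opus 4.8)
The plan is to argue via asummability (Theorem~\ref{th:asum}). Introduce the two sets
$$P=\{\,s\in[n]\setminus\{i\}\ :\ {f_0}_{|x_s=0}\equiv\vect 0\,\},\qquad Q=\{\,s\in[n]\setminus\{i\}\ :\ {f_1}_{|x_s=1}\equiv\vect 1\,\}.$$
It suffices to show $P\cap Q\neq\emptyset$, since any $s\in P\cap Q$ satisfies the conclusion. First I would record two easy consequences of $f$ being positive and non-split. Since $f$ is non-split, $f_{|x_s=1}\not\equiv\vect 1$ for every $s\neq i$; as $f$ is positive in $x_i$ we have ${f_0}_{|x_s=1}\leq{f_1}_{|x_s=1}$, so ${f_0}_{|x_s=1}\equiv\vect 1$ would force $f_{|x_s=1}\equiv\vect 1$. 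Hence ${f_0}_{|x_s=1}\not\equiv\vect 1$ for all $s\neq i$, and dually ${f_1}_{|x_s=0}\not\equiv\vect 0$ for all $s\neq i$. Because $f_0$ is split there is $s\neq i$ with ${f_0}_{|x_s=0}\equiv\vect 0$ or ${f_0}_{|x_s=1}\equiv\vect 1$; the latter is now excluded, so $P\neq\emptyset$, and dually $Q\neq\emptyset$.

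Now assume for contradiction that $P\cap Q=\emptyset$, and fix $p\in P$, $q\in Q$ (so $p\neq q$ and $p,q\neq i$). From $p\in P$ we get $f_0\leq x_p$, i.e.\ every true point of $f_0$ has $p$-th coordinate $1$; from $q\in Q$ we get $f_1\geq x_q$, i.e.\ every false point of $f_1$ has $q$-th coordinate $0$. Moreover ${f_1}_{|x_p=1}\not\equiv\vect 1$, since otherwise $p\in Q$, contradicting $P\cap Q=\emptyset$; and ${f_0}_{|x_q=0}\not\equiv\vect 0$, since otherwise $q\in P$, again a contradiction. Using the first non-equivalence, pick a point on which ${f_1}_{|x_p=1}$ vanishes and lift it to $\vect a_1\in B^n$ with $(\vect a_1)_i=1$, $(\vect a_1)_p=1$; then $f(\vect a_1)=0$, and $(\vect a_1)_q=0$ because $f_1\geq x_q$. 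Using the second, pick a point on which ${f_0}_{|x_q=0}$ equals $1$ and lift it to $\vect b_2\in B^n$ with $(\vect b_2)_i=0$, $(\vect b_2)_q=0$; then $f(\vect b_2)=1$, and $(\vect b_2)_p=1$ because $f_0\leq x_p$.

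Next, let $\vect b_1$ be obtained from $\vect a_1$ by switching the $p$-th and $q$-th coordinates (so $(\vect b_1)_p=0$, $(\vect b_1)_q=1$, and $\vect b_1$ agrees with $\vect a_1$ elsewhere), and let $\vect a_2$ be obtained from $\vect b_2$ in the same way. Since $(\vect b_1)_i=(\vect b_1)_q=1$ and $q\in Q$, we have $f(\vect b_1)=1$; since $(\vect a_2)_i=(\vect a_2)_p=0$ and $p\in P$, we have $f(\vect a_2)=0$. Finally $\vect a_1$ and $\vect b_1$ agree off $\{p,q\}$, $\vect a_2$ and $\vect b_2$ agree off $\{p,q\}$, and on the coordinates $p$ and $q$ both sums $\vect a_1+\vect a_2$ and $\vect b_1+\vect b_2$ equal $(1,1)$; hence $\vect a_1+\vect a_2=\vect b_1+\vect b_2$ in $\mathbb R^n$. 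Thus the false points $\vect a_1,\vect a_2$ and the true points $\vect b_1,\vect b_2$ witness that $f$ is $2$-summable, contradicting Theorem~\ref{th:asum}. Therefore $P\cap Q\neq\emptyset$.

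I expect the main obstacle to be identifying the correct summability witness. A naive attempt to switch only the value of $x_i$ merely yields identities of the form (false $+$ true $=$ false $+$ true), which carry no information; the useful move is instead to switch the two coordinates $x_p$ and $x_q$ simultaneously in one false point with $x_i=1$ and one false point with $x_i=0$ — and the existence of precisely these two false points (together with the fact that switching $x_p,x_q$ back turns them into true points) is exactly what the hypotheses $p\in P$, $q\in Q$, $p\neq q$, and non-splitness deliver.
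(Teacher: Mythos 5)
Your proof is correct, and its skeleton matches the paper's: both arguments first rule out ${f_0}_{|x_p=1}\equiv\vect{1}$ and ${f_1}_{|x_r=0}\equiv\vect{0}$ using positivity and non-splitness (so that, in your notation, $P$ and $Q$ are nonempty), and both finish with a $2$-summability contradiction via Theorem~\ref{th:asum}. The difference lies entirely in how the summability witness is built. The paper fixes $p\in P$ and $r\in Q$ with $p\neq r$ and examines the two specific points $\vect{a}$ (exactly two $1$'s, at coordinates $i$ and $p$) and $\vect{b}$ (exactly two $0$'s, at coordinates $i$ and $r$): if $f(\vect{a})=1$ or $f(\vect{b})=0$, the conclusion follows directly by positivity (with $s=p$ or $s=r$), and otherwise $\vect{a},\notl{\vect{a}}$ are false, $\vect{b},\notl{\vect{b}}$ are true, and $\vect{a}+\notl{\vect{a}}=\vect{b}+\notl{\vect{b}}$ is the all-ones vector. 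Your version instead assumes $P\cap Q=\emptyset$, extracts arbitrary witnesses of ${f_1}_{|x_p=1}\not\equiv\vect{1}$ and ${f_0}_{|x_q=0}\not\equiv\vect{0}$ (which exist precisely because $p\notin Q$ and $q\notin P$), and produces the matching true and false points by transposing the coordinates $p$ and $q$. Both witnesses are legitimate, and all of your intermediate assertions (the forced values $(\vect{a}_1)_q=0$ and $(\vect{b}_2)_p=1$, the evaluations of $f$ on the swapped points, and the equality of the sums) check out. The paper's construction has the aesthetic advantage that the common sum is $(1,\dots,1)$ with no coordinate bookkeeping, while yours avoids the intermediate case analysis on $f(\vect{a})$ and $f(\vect{b})$ because the non-membership conditions hand you the required false and true points directly.
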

\begin{proof}
	Since $f_0$ is split, there exists $p \in [n]$ such that ${f_0}_{|x_p=0} \equiv \vect{0}$ or 
	${f_0}_{|x_p=1} \equiv \vect{1}$. We claim that the latter case is impossible. 
	Indeed, as ${f_0}_{|x_p=1} = f_{|x_i=0,x_p=1}$, positivity of $f$ 
	and ${f_0}_{|x_p=1} \equiv \vect{1}$ imply $f_{|x_i=1,x_p=1} \equiv \vect{1}$, and therefore 
	$f_{|x_p=1} \equiv \vect{1}$.
	This contradicts the assumption that $f$ is non-split. 
	Hence, ${f_0}_{|x_p=0} \equiv \vect{0}$.
	Similarly, one can show that ${f_1}_{|x_r=1} \equiv \vect{1}$ for some $r \in [n]$. 
	If $p = r$, then we are done. 
	
	Assume that $p \neq r$. Let $\vect{a}$ be the point in $B^n$ that has exactly two 1's in
	coordinates $i$ and $p$. If $f(\vect{a}) = 1$, then by positivity
	$f_{|x_i=1,x_p=1} = {f_1}_{|x_p=1} \equiv 1$, and the claim follows for $s = p$.
	Let now $\vect{b}$ be a point in $B^n$ that has exactly two 0's in coordinates $i$ and $r$.
	If $f(\vect{b}) = 0$, then by positivity $f_{|x_i=0,x_r=0} = {f_0}_{|x_r=0} \equiv 0$, 
	and the claim follows for $s = r$. 
	
	Assume now that $f(\vect{a}) = 0$ and $f(\vect{b}) = 1$. 
	Since ${f_0}_{|x_p=0} \equiv \vect{0}$ and ${f_1}_{|x_r=1} \equiv \vect{1}$
	we conclude that 
	$f(\notl{\vect{a}})=0$ and $f(\notl{\vect{b}})=1$. Therefore, $\vect{a}+\notl{\vect{a}}=\vect{b}+\notl{\vect{b}}$ and hence by Theorem~\ref{th:asum} 
	$f$ is not threshold. This contradiction completes the proof.
\end{proof}

\begin{corollary}\label{cor:non-split}
	\begin{enumerate}
		\item[(a)] Variable $x_s$ from Claim \ref{cl:com_var} is relevant for
		both functions $f_0$ and $f_1$.
		
		\item[(b)] 
		If a point 
		$\vect{\textup{a}} = (\alpha_1, \ldots, \alpha_{i-1}, \alpha_{i+1}, \ldots, \alpha_{n}) \in B^{n-1}$ 
		is an extremal point of $f_{\alpha_i}$, $\alpha_i \in \{ 0, 1\}$,
		then 
		$\vect{\textup{a}}' = (\alpha_1, \ldots, \alpha_{i-1}, \alpha_i, \alpha_{i+1}, \ldots, \alpha_{n-1}) \in B^n$
		is an extremal point of $f$.
%
	\end{enumerate}
\end{corollary}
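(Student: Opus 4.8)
The plan is to deduce part (a) directly from non-splitness of $f$ together with the two identities supplied by Claim~\ref{cl:com_var}, and to prove part (b) by a short case analysis over $\alpha_i \in \{0,1\}$ and over the type of the extremal point $\vect{a}$ (maximal zero versus minimal one), in each case reducing the claim to the value of $f_{1-\alpha_i}$ at one specially chosen point.

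For part (a), first observe that non-splitness of $f$ forces $f_0 = f_{|x_i=0} \not\equiv \vect{0}$ and $f_1 = f_{|x_i=1} \not\equiv \vect{1}$, and, using positivity, also $f_0 \not\equiv \vect{1}$ and $f_1 \not\equiv \vect{0}$. Combining $f_0 \not\equiv \vect{0}$ with ${f_0}_{|x_s=0} \equiv \vect{0}$ yields ${f_0}_{|x_s=1} \not\equiv \vect{0} \equiv {f_0}_{|x_s=0}$, so $x_s$ is relevant for $f_0$; symmetrically, $f_1 \not\equiv \vect{1}$ and ${f_1}_{|x_s=1} \equiv \vect{1}$ give ${f_1}_{|x_s=0} \not\equiv \vect{1} \equiv {f_1}_{|x_s=1}$, so $x_s$ is relevant for $f_1$.

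For part (b), write $\vect{a}'$ for the point of $B^n$ obtained from $\vect{a} \in B^{n-1}$ by inserting $\alpha_i$ in coordinate $i$, and $\vect{b}$ for the point obtained by inserting instead the opposite bit $1-\alpha_i$; note $f(\vect{a}') = f_{\alpha_i}(\vect{a})$ and $f(\vect{b}) = f_{1-\alpha_i}(\vect{a})$. Two of the four cases are immediate: if $\alpha_i = 0$ and $\vect{a}$ is a minimal one of $f_0$, then every point $\sbelow \vect{a}'$ still has $i$-th coordinate $0$ and hence projects strictly below $\vect{a}$, so it is false, and $\vect{a}'$ is a minimal one of $f$; dually, if $\alpha_i = 1$ and $\vect{a}$ is a maximal zero of $f_1$, every point strictly above $\vect{a}'$ has $i$-th coordinate $1$ and projects strictly above $\vect{a}$, so it is true, and $\vect{a}'$ is a maximal zero of $f$. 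In the two remaining cases, positivity reduces everything to checking one value of $f$: when $\alpha_i = 0$ and $\vect{a}$ is a maximal zero of $f_0$ we must show $f_1(\vect{a}) = 1$, and when $\alpha_i = 1$ and $\vect{a}$ is a minimal one of $f_1$ we must show $f_0(\vect{a}) = 0$. If the $s$-th coordinate of $\vect{a}$ is on the favourable side ($1$ in the first situation, $0$ in the second), this is immediate from ${f_1}_{|x_s=1}\equiv\vect{1}$, respectively ${f_0}_{|x_s=0}\equiv\vect{0}$. The one delicate subcase is when it is on the wrong side: then maximality (resp. minimality) of $\vect{a}$ combined with ${f_0}_{|x_s=0}\equiv\vect{0}$ (resp. ${f_1}_{|x_s=1}\equiv\vect{1}$) pins $\vect{a}$ down to the unique point having $1$ in every coordinate except $s$ (resp. having $1$ only in coordinate $s$); hence $\vect{b}$ has $1$ in every coordinate except $s$ (resp. only in coordinate $s$), and if $f(\vect{b})$ took the wrong value, positivity would yield $f_{|x_s=0}\equiv\vect{0}$ (resp. $f_{|x_s=1}\equiv\vect{1}$), contradicting non-splitness of $f$.

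I do not expect a genuine obstacle here: the argument is essentially coordinate bookkeeping between $B^{n-1}$ and $B^n$, and the only point worth isolating is that the bad subcases collapse to the two ``corner'' points $\notl{\vect{e}}$ and $\vect{e}$ adjacent to $\vect{0}$ and $\vect{1}$ along coordinate $s$, where non-splitness of $f$ can be invoked. It is perhaps worth remarking that, unlike in the proof of Theorem~\ref{prop:1}, asummability (Theorem~\ref{th:asum}) is not needed for this corollary.
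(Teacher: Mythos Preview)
Your proof is correct and follows essentially the same route as the paper's. Part (a) is identical. For part (b), the paper argues only the case $\alpha_i = 1$ and handles the non-trivial subcase (minimal one of $f_1$) by taking an arbitrary counterexample $\vect{b}' \sbelow \vect{a}'$ with $f(\vect{b}')=1$ and then splitting on $(\vect{b}')_i$ and $(\vect{b}')_s$; your observation that positivity reduces the question to the single point obtained by flipping the $i$-th coordinate of $\vect{a}'$ is a mild streamlining of exactly this argument, and the delicate subcase is resolved in both proofs by the same appeal to non-splitness of $f$ at the ``corner'' point along coordinate $s$.
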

\begin{proof}
	\begin{enumerate}
		\item[(a)] Suppose to the contrary that $f_0$ does not depend on $x_s$. Then
		${f_0}_{|x_s=1} \equiv {f_0}_{|x_s=0} \equiv \vect{0}$, and therefore
		$f_0 = f_{x_i=0} \equiv 0$, which contradicts the assumption that $f$ is non-split.
		Similarly, one can show that $x_s$ is relevant for $f_1$.
		

		\item[(b)] We prove the statement for $\alpha_i = 1$. For $\alpha_i = 0$ the arguments 
		are symmetric.
		If $\vect{a}$ is a maximal zero of $f_1$, then $\vect{a}'$ is a maximal zero of $f$.
		Indeed, for every point 
		$\vect{b}' = (\beta_1, \ldots, \beta_{i-1}, \beta_i, \beta_{i+1}, \ldots, \beta_{n}) \in B^n$ such that
		$\vect{a}' \sbelow \vect{b}'$ 
		we have $\beta_i = 1$. Hence 
		$\vect{a} \sbelow \vect{b} = (\beta_1, \ldots, \beta_{i-1}, \beta_{i+1}, \ldots, \beta_{n})$,
		and $f_1(\vect{b}) = f(\vect{b}')$. Therefore $f(\vect{b}') = 0$ would imply that $\vect{a}$
		is not a maximal zero of $f_1$. This contradiction shows that $\vect{a}'$ is a maximal zero of $f$.
		
		Let now $\vect{a}$ be a minimal one of $f_1$. For convenience, without loss of generality, 
		we assume that $s < i$. Suppose to the contrary, that $\vect{a}'$ is not a minimal one of $f$, i.e., 
		there exists a point
		$\vect{b}' = (\beta_1, \ldots, \beta_{i-1}, \beta_i, \beta_{i+1}, \ldots, \beta_{n}) \in B^n$
		such that  $\vect{b}'\sbelow \vect{a}'$ and $f(\vect{b}') = 1$. Note that if $\beta_i = 1$, then
		$\vect{b} \sbelow \vect{a}$ and $f(\vect{b}') = f_1(\vect{b})$, where as before,
		$\vect{b} = (\beta_1, \ldots, \beta_{i-1}, \beta_{i+1}, \ldots, \beta_{n})$.  
		Since $\vect{a}$ is a minimal one of $f_1$, we conclude that $f_1(\vect{b}) = f(\vect{b}') = 0$, 
		which is a contradiction. 
		Therefore we assume further that $\beta_i = 0$ and
		distinguish between two cases: 
		\begin{enumerate}
			\item[$\beta_s = 0$.] In this case 
			$$
				f(\vect{b}') = (\notl{\beta_i}  \andl f_0(\vect{b})) \orl (\beta_i  \andl f_1(\vect{b}))
				= f_0(\vect{b}) = 0,
			$$
			where the latter equality follows from ${f_0}_{|x_s = 0} \equiv \vect{0}$.
			This is a contradiction to our assumption that $f(\vect{b}') = 1$.
			
			\item[$\beta_s = 1$.] In this case, $\alpha_s = 1$. 
			Note that the equivalence ${f_1}_{|x_s = 1} \equiv \vect{1}$ means that function $f_1$
			takes value 1 on every point with $s$-th coordinate being equal to 1. Together with
			the minimality of $\vect{a}$ this implies that the only non-zero component of $\vect{a}$ is 
			$\alpha_s$.
			Hence, the only non-zero component of $\vect{b}'$ is $\beta_s$.
			Therefore $f(\vect{b}') = 1$ and positivity of $f$ imply
			 $f_{|x_s = 1} \equiv \vect{1}$, which contradicts the assumption that $f$ is
			 non-split.
		\end{enumerate}
%
	\end{enumerate}
\end{proof}

\begin{lemma}\label{lem:non-split-split-projection}
	Let $f = f(x_1, \ldots, x_n)$ be a positive threshold non-split function 
	with $k$ relevant variables, and there exists $i \in [n]$ such that
	both $f_0 = f_{|x_i=0}$ and $f_1 = f_{|x_i=1}$ are split.
	Then the number of extremal points of $f$ is at least $k+2$.
\end{lemma}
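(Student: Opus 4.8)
The plan is to reduce everything to the two restrictions $f_0 = f_{|x_i=0}$ and $f_1 = f_{|x_i=1}$, each of which is a positive threshold function on $n-1$ variables and hence subject to the induction hypothesis. Let $k_0$ and $k_1$ denote the numbers of relevant variables of $f_0$ and $f_1$. Since $f$ is non-split, neither restriction is constant (for instance $f_0 \equiv \vect{0}$ would make $f$ split by the very definition of \splitf{}, while $f_0 \equiv \vect{1}$ would force $f \equiv \vect{1}$ by positivity, hence $f$ split), so both $k_0, k_1 \geq 1$; the induction hypothesis then yields $r(f_0) \geq k_0 + 1$ and $r(f_1) \geq k_1 + 1$.

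The first step is to show $r(f) \geq r(f_0) + r(f_1)$. By Corollary~\ref{cor:non-split}(b), inserting the bit $0$ (respectively $1$) in coordinate $i$ turns every extremal point of $f_0$ (respectively $f_1$) into an extremal point of $f$. Both insertion maps are injective, and their images are disjoint, since points in the first have $i$-th coordinate $0$ while points in the second have $i$-th coordinate $1$. Hence $r(f) \geq r(f_0) + r(f_1) \geq k_0 + k_1 + 2$.

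The second step is the inequality $k \leq k_0 + k_1$; combined with the bound above it gives $r(f) \geq k+2$, as required. First note that $x_i$ is relevant for $f$: otherwise $f_0$ and $f_1$ would both coincide with $f$, and the splitness of $f_0$ would directly witness the splitness of $f$. Consequently $k = 1 + |V|$, where $V$ is the set of variables $x_j$ with $j \neq i$ that are relevant for $f$. A routine restriction argument (flip $x_j$ at a point witnessing relevance and look at the value of $x_i$ there, and conversely lift a witness for $f_{\alpha_i}$ to $f$) shows that $x_j$ is relevant for $f$ if and only if it is relevant for $f_0$ or for $f_1$, so $V$ equals the union of the set of relevant variables of $f_0$ and that of $f_1$. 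By Corollary~\ref{cor:non-split}(a), the variable $x_s$ produced by Claim~\ref{cl:com_var} belongs to both of these sets, so $|V| \leq k_0 + k_1 - 1$, and therefore $k \leq k_0 + k_1$.

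The proof is largely bookkeeping, since the substantive work is already done in Claim~\ref{cl:com_var} and Corollary~\ref{cor:non-split}. The one point deserving care is the origin of the improvement from the trivial estimate $r(f) \geq k+1$ to $r(f) \geq k+2$: it comes entirely from the common variable $x_s$ being counted in both $k_0$ and $k_1$ while contributing only once to $k$. Thus the whole argument hinges on $x_s$ being relevant for \emph{both} $f_0$ and $f_1$ — precisely the content of Corollary~\ref{cor:non-split}(a) — and not merely for one of them.
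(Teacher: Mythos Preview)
Your proof is correct and follows essentially the same route as the paper: use Corollary~\ref{cor:non-split}(b) to get $r(f)\geq r(f_0)+r(f_1)$, apply the induction hypothesis to each restriction, and then use Corollary~\ref{cor:non-split}(a) together with the inclusion--exclusion count on relevant variables to obtain $k\leq k_0+k_1$. Your write-up is slightly more detailed (you argue explicitly that neither restriction is constant and that $x_i$ is relevant), but the structure and the key inputs are identical.
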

\begin{proof}
	Let $s \in [n] \setminus \{ i \}$ be an index guaranteed by Claim \ref{cl:com_var}.
	Let $P, P_0$, and $P_1$ be the sets of relevant variables of $f, f_0$, and $f_1$,
	respectively. Since any relevant variable of $f$ is a relevant variable of at least one 
	of the functions $f_0, f_1$ and, by Corollary \ref{cor:non-split} (a), $x_s$ is a relevant 
	variable of both of them, we have
	$$
		k = |P| \leq |P_0 \cup P_1| + 1 = |P_0| + |P_1| - |P_0 \cap P_1| + 1 \leq |P_0| + |P_1|.
	$$
	By the induction hypothesis, $r(f_i) \geq |P_i|+1$, where $i = 0,1$. 
	Finally, by Corollary \ref{cor:non-split} (b) the number $r(f)$ of 
	extremal points of $f$ is at least $r(f_0) + r(f_1) \geq |P_0| + |P_1| + 2 \geq k + 2$.
\end{proof}

\subsection{Non-split functions without split restrictions}
\label{sec:ns2}

Due to Lemmas \ref{lem:split} and \ref{lem:non-split-split-projection}  
it remains to show the bound for a positive threshold non-split function $f = f(x_1, \ldots, x_n)$ 
such that for every $i \in [n]$ at least one of $f_0 = f_{|x_i=0}$ and $f_1 = f_{|x_i=1}$ is non-split.

Assume without loss of generality that $x_n$ is a relevant variable of $f$, and
let $f_0 = f_{|x_n=0}$ and $f_1 = f_{|x_n=1}$. 
We assume that $f_0$ is non-split and prove that $f$ has at least $k+2$ extremal points, where
$k$ is the number of relevant variables of $f$.
The case when $f_0$ is split, but $f_1$ is non-split is proved similarly.
Let us denote the number of relevant variables of $f_0$ by $m$. Clearly, $1 \leq m \leq k-1$.
Exactly $k-1-m$ of $k$ relevant variables of $f$ became irrelevant for the function $f_0$.
Note that these $k-1-m$ variables are necessary relevant for the function $f_1$.
By the induction hypothesis, the number $r(f_0)$ of extremal points of $f_0$ is at least $m+2$.

We introduce the following notation:
\begin{enumerate}
	\item[$C_0$] -- the set of maximal zeros of $f$ corresponding to $x_n$;
	\item[$P_0$] -- the set of all other maximal zeros of $f$, i.e., $P_0 = Z^f \setminus C_0$;
	\item[$C_1$] -- the set of minimal ones of $f$ corresponding to $x_n$;
	\item[$P_1$] -- the set of all other minimal ones of $f$, i.e., $P_1 = U^f \setminus C_1$.
\end{enumerate}

For a set $A \subseteq B^n$ we will denote by $A^*$ the restriction of $A$ into the first $n-1$
coordinates, i.e., 
$A^* = \{ (\alpha_1, \ldots, \alpha_{n-1})~|~(\alpha_1, \ldots, \alpha_{n-1},\alpha_{n}) \in A \text{ for some }
\alpha_n \in \{0,1\} \}$.

By definition, the number of extremal points of $f$ is 
\begin{equation}\label{eq:r_f}
	r(f) = |C_0| + |P_1| + |C_1| + |P_0| = |C_0^*| + |P_1^*| + |C_1^*| + |P_0^*|.
\end{equation}

We want to express $r(f)$ in terms of the number of extremal points of $f_0$ and $f_1$. For this
we need several observations. First, for every extremal point 
$(\alpha_1, \ldots, \alpha_{n-1}, \alpha_n)$ for $f$ the point $(\alpha_1, \ldots, \alpha_{n-1})$
is extremal for $f_{\alpha_n}$. Furthermore, we have the following straightforward claim.

\begin{claim}\label{cl:p1_p0}
	$P_1^*$ is the set of minimal ones of $f_0$ and $P_0^*$ is the set of maximal zeros of $f_1$.
\end{claim}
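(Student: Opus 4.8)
The plan is to prove both set equalities by double inclusion, exploiting the elementary fact that once the $n$-th coordinate of a point is pinned, the partial order $\below$ restricted to such points is carried isomorphically to $(B^{n-1}, \below)$ by the projection $A \mapsto A^*$, and that $f$ evaluated on a point with $n$-th coordinate $\alpha_n$ equals $f_{\alpha_n}$ evaluated on its projection.

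First I would establish $P_1^* = U^{f_0}$. Given $\vect{p} = (\alpha_1, \ldots, \alpha_{n-1}) \in B^{n-1}$, consider the lifted point $(\vect{p}, 0) = (\alpha_1, \ldots, \alpha_{n-1}, 0)$, for which $f((\vect{p},0)) = f_0(\vect{p})$. The key observation is that every point $\below (\vect{p}, 0)$ has $n$-th coordinate $0$ as well, since passing to a smaller point in $\below$ can only turn $1$'s into $0$'s. Consequently, $(\vect{p},0)$ is a minimal one of $f$ if and only if $\vect{p}$ is a minimal one of $f_0$, and in that case $(\vect{p},0)$ belongs to $P_1$ exactly because its $n$-th coordinate is $0$. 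Since, conversely, every element of $P_1$ has $n$-th coordinate $0$ and is therefore of the form $(\vect{p},0)$ with $(\vect{p},0)^* = \vect{p}$, we obtain the chain of equivalences $\vect{p} \in U^{f_0} \iff (\vect{p},0) \in P_1 \iff \vect{p} \in P_1^*$, which is the first assertion.

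The argument for $P_0^* = Z^{f_1}$ is the mirror image. For the lifted point $(\vect{p}, 1) = (\alpha_1, \ldots, \alpha_{n-1}, 1)$ we have $f((\vect{p},1)) = f_1(\vect{p})$, and every point $\vect{q}$ with $(\vect{p},1) \below \vect{q}$ again has $n$-th coordinate $1$, since moving upward in $\below$ cannot turn a $1$ into a $0$. Hence $(\vect{p},1)$ is a maximal zero of $f$ if and only if $\vect{p}$ is a maximal zero of $f_1$, and the maximal zeros of $f$ arising this way are precisely the elements of $P_0 = Z^f \setminus C_0$ (those maximal zeros of $f$ whose $n$-th coordinate equals $1$), which yields $P_0^* = Z^{f_1}$.

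I do not expect a genuine obstacle here: the statement is purely about how restrictions interact with the order, and even positivity of $f$ is not needed. The only point that warrants explicit verification is the ``comparability is preserved and reflected'' observation — below a point with last coordinate $0$ one remains with last coordinate $0$, and above a point with last coordinate $1$ one remains with last coordinate $1$ — which is exactly what lets minimality (resp.\ maximality) with respect to $f_0$ (resp.\ $f_1$) coincide with minimality (resp.\ maximality) with respect to $f$.
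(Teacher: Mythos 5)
Your proof is correct. The paper states this claim without proof, calling it ``straightforward,'' and your double-inclusion argument --- based on the observation that the down-set of a point with last coordinate $0$ (resp.\ the up-set of a point with last coordinate $1$) stays within the slice $x_n=0$ (resp.\ $x_n=1$), so minimality/maximality transfers between $f$ and the corresponding restriction --- is exactly the intended elementary verification.
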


In contrast to minimal ones of $f_0$, the set of maximal zeros of $f_0$ in addition to the points in $C_0^*$
may contain extra points, which we denote by $N_0^*$. In other words, $Z^{f_0} = C_0^* \cup N_0^*$.
Similarly, besides $C_1^*$, the set of minimal ones of $f_1$ may contain additional points, which we denote
by $N_1^*$. That is, $U^{f_0} = C_1^* \cup N_1^*$.

\begin{claim}\label{cl:n0_p0}
	The set $N_0^*$ is a subset of the set $P_0^*$ of maximal zeros of $f_1$.
	The set $N_1^*$ is a subset of the set $P_1^*$ of minimal ones of $f_0$.
\end{claim}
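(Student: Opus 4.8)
The plan is to exploit the fact that positivity of $f$ in the variable $x_n$ forces the pointwise inequality $f_0(\vect{x}) \le f_1(\vect{x})$ for every $\vect{x} \in B^{n-1}$: indeed $(\vect{x},0) \below (\vect{x},1)$, so $f_0(\vect{x}) = f(\vect{x},0) = 1$ implies $f_1(\vect{x}) = f(\vect{x},1) = 1$. Both inclusions of the claim will follow from this monotonicity together with an explicit description of the ``extra'' points $N_0^*$ and $N_1^*$.

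First I would pin down what $N_0^*$ and $N_1^*$ are. If $\vect{a}$ is a maximal zero of $f_0$, then every point of $B^{n-1}$ strictly above $\vect{a}$ is a true point of $f_0$; hence the point $(\vect{a},0)$ has every upper neighbour obtained by raising some coordinate $j < n$ mapped to $1$, and therefore $(\vect{a},0)$ fails to be a maximal zero of $f$ exactly when its remaining upper neighbour $(\vect{a},1)$ is also a zero, i.e.\ when $f_1(\vect{a}) = 0$. Consequently
\[
	N_0^* = \{ \vect{a} \in Z^{f_0} \mid f_1(\vect{a}) = 0 \},
\]
and by the symmetric argument
\[
	N_1^* = \{ \vect{b} \in U^{f_1} \mid f_0(\vect{b}) = 1 \}.
\]

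With these descriptions the inclusions are immediate. Take $\vect{a} \in N_0^*$. Then $f_1(\vect{a}) = 0$ by the displayed characterisation, while for every $\vect{b}$ with $\vect{a} \sbelow \vect{b}$ we have $f_1(\vect{b}) \ge f_0(\vect{b}) = 1$ since $\vect{a}$ is a maximal zero of $f_0$. Hence $\vect{a}$ is a maximal zero of $f_1$, that is, $\vect{a} \in Z^{f_1} = P_0^*$ by Claim~\ref{cl:p1_p0}. Dually, for $\vect{b} \in N_1^*$ we have $f_0(\vect{b}) = 1$, and for every $\vect{c} \sbelow \vect{b}$ we have $f_0(\vect{c}) \le f_1(\vect{c}) = 0$ because $\vect{b}$ is a minimal one of $f_1$; so $\vect{b}$ is a minimal one of $f_0$, i.e.\ $\vect{b} \in U^{f_0} = P_1^*$, again by Claim~\ref{cl:p1_p0}.

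There is no serious obstacle here; the only step requiring care is the bookkeeping that identifies $N_0^*$ and $N_1^*$ correctly --- in particular the observation that a maximal zero of $f_0$ automatically keeps all of its ``horizontal'' upper neighbours true, so that the sole possible obstruction to $(\vect{a},0)$ being a maximal zero of $f$ lies in the coordinate $x_n$. Everything else is a direct consequence of the monotonicity $f_0 \le f_1$ and the definition of extremal points.
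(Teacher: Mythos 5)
Your proof is correct and uses the same essential ingredients as the paper's: positivity of $f$ gives $f_0 \le f_1$ pointwise, membership in $N_0^*$ forces $f_1(\vect{a}) = 0$, and maximality with respect to $f_0$ then transfers to $f_1$. The paper phrases the second step as a contradiction (picking a maximal zero of $f_1$ strictly above $\vect{a}$ and violating positivity), whereas you argue directly, but this is only a cosmetic difference.
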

\begin{proof}
We will prove the first part of the statement, the second one is proved similarly.
Suppose to the contrary that there exists a point 
$\vect{a} = (\alpha_1, \ldots, \alpha_{n-1}) \in N_0^* \setminus P_0^*$, which is a maximal zero for
$f_0$, but is not a maximal zero for $f_1$.
Notice that $f_1(\vect{a}) = 0$, as otherwise $(\alpha_1, \ldots, \alpha_{n-1}, 0)$ would be a maximal zero for
$f$, which is not the case, since $\vect{a} \notin C_0^*$.
Since $\vect{a}$ is not a maximal zero for $f_1$, there exists a maximal zero $\vect{b} \in B^{n-1}$
for $f_1$ such that $\vect{a} \sbelow \vect{b}$.
But then we have $f_0(\vect{b}) = 1$ and $f_1(\vect{b}) = 0$, which contradicts positivity
of function $f$.
\end{proof}

From Claim \ref{cl:p1_p0} we have 
$r(f_0) = |Z^{f_0} \cup U^{f_0}| = |C_0^*| + |N_0^*| + |P_1^*|$, which together with (\ref{eq:r_f}) 
and Claim \ref{cl:n0_p0} imply

\begin{equation}\label{eq:rf}
\begin{split}
	r(f) & = |C_0^*| + |P_1^*| + |C_1^*| + |P_0^*| = |C_0^*| + |P_1^*| + |C_1^*|  + |N_0^*| + 
	|P_0^* \setminus N_0^*| \\ 
	& = r(f_0) + |C_1^*| + |P_0^* \setminus N_0^*|.
\end{split}
\end{equation}

Using the induction hypothesis we conclude that $r(f) \geq m+2 + |C_1^*| + |P_0^* \setminus N_0^*|$.
To derive the desired bound $r(f) \geq k+2$, in the rest of this section we show that 
$C_1^* \cup P_0^* \setminus N_0^*$ contains at least $k-m$ points.

\begin{claim}\label{cl:proper_pairs}
	Let $x_i$, $i \in [n-1]$, be a relevant variable for $f_1$, but irrelevant for $f_0$.
	Then there exists an $x_i$-extremal pair $(\vect{\textup{a}}, \vect{\textup{b}})$ for $f_1$ such that 
	$\vect{\textup{a}} \in P_0^* \setminus N_0^*$ and $\vect{\textup{b}} \in C_1^*$.
\end{claim}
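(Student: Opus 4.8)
The plan is to build the required $x_i$-extremal pair for $f_1$ by first selecting a suitable minimal one $\vect{b}$ of $f_1$, then recovering a matching maximal zero $\vect{a}$ via Claim~\ref{cl:extremal_pair}, and finally reading off the memberships $\vect{b} \in C_1^*$ and $\vect{a} \in P_0^* \setminus N_0^*$ from the hypothesis that $x_i$ is irrelevant for $f_0$, together with positivity of $f$.

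First I would produce $\vect{b}$. Since $x_i$ is relevant for the positive function $f_1$, there is a point $\vect{c} \in B^{n-1}$ with $(\vect{c})_i = 0$, $f_1(\vect{c}) = 0$ and $f_1(\vect{c}^+) = 1$, where $\vect{c}^+$ denotes the point obtained from $\vect{c}$ by flipping the $i$-th coordinate to $1$. I would pick any minimal one $\vect{b}$ of $f_1$ with $\vect{b} \below \vect{c}^+$; positivity forces $(\vect{b})_i = 1$, since otherwise $\vect{b} \below \vect{c}$ and $f_1(\vect{b}) \le f_1(\vect{c}) = 0$. Applying the second part of Claim~\ref{cl:extremal_pair} to $f_1$ and $\vect{b}$ then yields a maximal zero $\vect{a}$ of $f_1$ with $(\vect{a})_i = 0$ such that $(\vect{a},\vect{b})$ is an $x_i$-extremal pair for $f_1$, and it remains only to verify the two memberships.

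For $\vect{b} \in C_1^*$: by definition this amounts to showing that $(\vect{b},1) \in B^n$ is a minimal one of $f$. As $\vect{b}$ is a minimal one of $f_1$, every point strictly below $(\vect{b},1)$ with $n$-th coordinate $1$ is false for $f$, so it suffices to check that $(\vect{b},0)$ is false for $f$, i.e. $f_0(\vect{b}) = 0$. If instead $f_0(\vect{b}) = 1$, let $\vect{b}'$ be obtained from $\vect{b}$ by flipping the $i$-th coordinate to $0$; irrelevance of $x_i$ for $f_0$ gives $f_0(\vect{b}') = f_0(\vect{b}) = 1$, so $(\vect{b}',0)$ is true for $f$, whereas $\vect{b}' \sbelow \vect{b}$ and minimality of $\vect{b}$ give $f_1(\vect{b}') = 0$, so $(\vect{b}',1)$ is false for $f$ --- contradicting positivity of $f$, since $(\vect{b}',0) \sbelow (\vect{b}',1)$. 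For $\vect{a} \in P_0^* \setminus N_0^*$: by Claim~\ref{cl:p1_p0} the set $P_0^*$ is exactly the set of maximal zeros of $f_1$, hence $\vect{a} \in P_0^*$; and since $N_0^* \subseteq Z^{f_0}$, it is enough to show $\vect{a} \notin Z^{f_0}$. Were $\vect{a}$ a maximal zero of $f_0$, then the point $\vect{a}^+$ obtained by flipping its $i$-th coordinate to $1$ would be true for $f_0$ (as $\vect{a} \sbelow \vect{a}^+$), while irrelevance of $x_i$ for $f_0$ gives $f_0(\vect{a}^+) = f_0(\vect{a}) = 0$, a contradiction. This yields $\vect{a} \in P_0^* \setminus N_0^*$ and finishes the proof.

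I expect the one genuinely delicate point to be the membership $\vect{b} \in C_1^*$: there one must use positivity of the full function $f$ (not merely of the restrictions $f_0$, $f_1$) to exclude $f_0(\vect{b}) = 1$, and one must correctly unfold the definition of $C_1^*$ as ``$(\vect{b},1)$ is a minimal one of $f$''. The rest is routine bookkeeping with the definitions of an $x_i$-extremal pair and of the sets $C_1^*$, $P_0^*$, $N_0^*$, plus the already-established Claims~\ref{cl:extremal_pair} and~\ref{cl:p1_p0}.
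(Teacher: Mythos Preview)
Your proof is correct. The approach is close to the paper's but organized a bit differently: the paper does not fix a single pair and verify its memberships, but instead proves the stronger structural fact that \emph{every} point of $N_0^*$ has $i$-th coordinate $1$ and every point of $P_1^*$ has $i$-th coordinate $0$, and then invokes Claim~\ref{cl:n0_p0} (that $N_1^* \subseteq P_1^*$) to conclude that \emph{all} maximal zeros of $f_1$ corresponding to $x_i$ lie in $P_0^* \setminus N_0^*$ and \emph{all} minimal ones of $f_1$ corresponding to $x_i$ lie in $C_1^*$. Your argument for $\vect{b} \in C_1^*$ is more self-contained --- you verify directly that $(\vect{b},1)$ is a minimal one of $f$ without passing through Claim~\ref{cl:n0_p0} --- at the cost of establishing only what the statement literally asks for rather than the stronger ``every pair works'' fact; either version is sufficient for the application to the extremal graph $H(f_1,x_{i_1},\dots,x_{i_s})$ that follows.
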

\begin{proof}
First, let us show that an $x_i$-extremal pair always exists.
Since $x_i$ is relevant for $f_1$, there exists a pair of points $\vect{x}$ and $\vect{y}$,
which differ only in the $i$-th coordinate and $f_1(\vect{x}) \neq f_1(\vect{y})$. Without loss of generality, 
let $(\vect{x})_i = 0$ and $(\vect{y})_i = 1$. Then by positivity, $f_1(\vect{x}) = 0$
and $f_1(\vect{y}) = 1$. Let $\vect{x}'$ be any maximal zero of $f_1$ such that $\vect{x} \below \vect{x}'$.
Then obviously $\vect{x}'$ is a maximal zero corresponding to $x_i$ and the existence of 
an $x_i$-extremal pair for $f_1$ follows from Claim \ref{cl:extremal_pair}.
%

We claim that $(\vect{x})_i = 1$ for every $\vect{x} \in N_0^*$. Indeed, if $(\vect{x})_i = 0$
for a maximal zero $\vect{x} \in N_0^*$, then changing in $\vect{x}$ the $i$-th coordinate from $0$
to $1$ we would obtain the point $\vect{x}'$ with $f_0(\vect{x}') = 1 \neq f_0(\vect{x})$, which
would contradict the assumption that $x_i$ is irrelevant for $f_0$.
Similarly, one can show that $(\vect{y})_i = 0$ for every $\vect{y} \in P_1^*$.

The above observations together with Claim \ref{cl:n0_p0} imply that 
every maximal zero for $f_1$ corresponding to $x_i$ belongs to $P_0^* \setminus N_0^*$ 
and every minimal one for $f_1$ corresponding to $x_i$ belongs to $C_1^*$. Hence the claim.
\end{proof}

Recall that there are exactly $s = k - 1 - m$ variables that are relevant for $f_1$ and irrelevant for $f_0$.
We denote these variables by $x_{i_1}, \ldots, x_{i_s}$. Let $H$ be an extremal graph $H(f_1, x_{i_1}, \ldots, x_{i_s})$
defined in such a way that all its vertices belong to $C_1^* \cup P_0^* \setminus N_0^*$.
Such a graph exists by Claim~\ref{cl:proper_pairs}.
By Lemma~\ref{lem:acyclic} the graph $H$ is acyclic, and hence it has at least $s+1$ vertices. 
Therefore, the set  $C_1^* \cup P_0^* \setminus N_0^*$ has at least $s+1 = k-m$ points.
This conclusion establishes the main result of this section.

\begin{lemma}\label{lem:non-split-without-split-projection}
	Let $f = f(x_1, \ldots, x_n)$ be a positive threshold non-split function 
	with $k$ relevant variables, and for every $i \in [n]$ at least one of the restrictions 
	$f_0 = f_{|x_i=0}$ and $f_1 = f_{|x_i=1}$ is non-split.
	Then the number of extremal points of $f$ is at least $k+2$.
\end{lemma}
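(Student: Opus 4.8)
The plan is to prove Lemma~\ref{lem:non-split-without-split-projection} essentially by following the chain of intermediate results already assembled in Section~\ref{sec:ns2}, and to verify that nothing in that development has been left to chance. Concretely, fix a positive threshold non-split function $f=f(x_1,\ldots,x_n)$ such that for every coordinate $i$ at least one of $f_{|x_i=0}$, $f_{|x_i=1}$ is non-split. By hypothesis $n>1$, so $f$ has a relevant variable; after permuting coordinates assume it is $x_n$, and set $f_0=f_{|x_n=0}$, $f_1=f_{|x_n=1}$. At least one of these is non-split; I will treat the case $f_0$ non-split (the other case being symmetric, swapping the roles of zeros and ones, i.e.\ passing to the dual function), and let $m$ be the number of relevant variables of $f_0$. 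Since $x_n$ is relevant, $1\le m\le k-1$, and by the induction hypothesis $r(f_0)\ge m+2$.

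Next I would set up the bookkeeping of the extremal points exactly as in the excerpt: partition $Z^f=C_0\sqcup P_0$ and $U^f=C_1\sqcup P_1$ according to whether the point has $n$-th coordinate $0$ or $1$, pass to the projections $C_0^*,P_0^*,C_1^*,P_1^*$ onto the first $n-1$ coordinates (these projections are injective on each class), and record the identity~(\ref{eq:r_f}). Then invoke Claim~\ref{cl:p1_p0} ($P_1^*=U^{f_0}$, $P_0^*=Z^{f_1}$), write $Z^{f_0}=C_0^*\cup N_0^*$ and $U^{f_1}=C_1^*\cup N_1^*$ for the ``extra'' maximal zeros/minimal ones, and apply Claim~\ref{cl:n0_p0} to get $N_0^*\subseteq P_0^*$. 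Substituting $r(f_0)=|C_0^*|+|N_0^*|+|P_1^*|$ into~(\ref{eq:r_f}) yields~(\ref{eq:rf}), namely $r(f)=r(f_0)+|C_1^*|+|P_0^*\setminus N_0^*|$, hence $r(f)\ge m+2+\bigl|C_1^*\cup(P_0^*\setminus N_0^*)\bigr|$, and it remains to bound the last term from below by $k-m$.

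For that bound I would identify the $s:=k-1-m$ variables $x_{i_1},\ldots,x_{i_s}$ that are relevant for $f$ but became irrelevant for $f_0$; each such variable is necessarily relevant for $f_1$. Claim~\ref{cl:proper_pairs} supplies, for each $x_{i_j}$, an $x_{i_j}$-extremal pair $(\vect{a}_{i_j},\vect{b}_{i_j})$ for $f_1$ with $\vect{a}_{i_j}\in P_0^*\setminus N_0^*$ and $\vect{b}_{i_j}\in C_1^*$; forming the extremal graph $H=H(f_1,x_{i_1},\ldots,x_{i_s})$ from these particular pairs, all $2s$ (a priori not distinct) named vertices lie in $C_1^*\cup(P_0^*\setminus N_0^*)$. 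Since $f_1$ is a threshold function (it is a restriction of the threshold function $f$), Lemma~\ref{lem:acyclic} says $H$ is acyclic; an acyclic graph with $s$ edges has at least $s+1$ vertices, so $\bigl|C_1^*\cup(P_0^*\setminus N_0^*)\bigr|\ge s+1=k-m$. Combining, $r(f)\ge m+2+(k-m)=k+2$, as required.

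The one point that genuinely needs care — and which I expect to be the main obstacle — is the symmetric ``$f_0$ split, $f_1$ non-split'' case: one must check that replacing $f$ by its dual (complementing all inputs and the output) preserves being a positive threshold function, exchanges maximal zeros with minimal ones (hence preserves $r(f)$ and the relevant-variable count), and exchanges split-via-$f_{|x_i=0}\equiv\vect{0}$ with split-via-$f_{|x_i=1}\equiv\vect{1}$, so that the hypotheses of the lemma are self-dual and the already-treated case applies verbatim. Alternatively one can simply rerun the whole argument with the roles of $Z$ and $U$, of $C_0/C_1$, and of $N_0^*/N_1^*$ interchanged, using the second halves of Claims~\ref{cl:p1_p0}, \ref{cl:n0_p0} and \ref{cl:proper_pairs}; either way the bound $r(f)\ge k+2$ follows. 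Everything else is a routine assembly of the claims and lemmas established earlier in the section.
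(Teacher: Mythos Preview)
Your proposal is correct and follows essentially the same route as the paper: fix a relevant variable $x_n$, assume $f_0$ is non-split, apply the induction hypothesis to $f_0$, set up the partition $C_0,P_0,C_1,P_1$, derive equation~(\ref{eq:rf}) via Claims~\ref{cl:p1_p0} and~\ref{cl:n0_p0}, and then use Claim~\ref{cl:proper_pairs} together with Lemma~\ref{lem:acyclic} to lower-bound $|C_1^*\cup(P_0^*\setminus N_0^*)|$ by $k-m$. The paper dispatches the symmetric case with ``is proved similarly''; your more explicit treatment via duality (or by interchanging the roles of zeros and ones throughout) is a welcome clarification but not a departure in method.
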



\section{Conclusion and open problems}
\label{sec:con}

In this paper we studied the cardinality and structure of two sets related to teaching
positive threshold Boolean functions: the specifying set and the set of their extremal
points.

First,  we showed the existence of positive threshold Boolean functions of $n$ variables,
which are not linear read-once and for which the {\it specification number} is at its lowest
bound, $n + 1$ (Theorem~\ref{prop:1}). An important open problem is to describe the set of all
such functions.

Second, we completely  described the set of all positive threshold Boolean functions of $n$ relevant
variables, for which the number of {\it extremal points} is at its lowest bound, $n + 1$. 
This is precisely the set of all positive linear read-once functions (Theorem~\ref{th:extremal_main}). 
It would be interesting to find out whether this result is valid for all positive functions, not necessarily threshold.
In other words,  is it true that a positive Boolean function of $n$ relevant
variables has $n + 1$ extremal points if and only if it is linear read-once?

Finally, we ask whether the acyclic structure of the set of extremal points of a positive threshold function $f$ can be helpful in determining 
the specification number of $f$.

\paragraph*{Acknowledgment} This work was supported by the Russian Science Foundation Grant No. 17-11-01336.

\end{document}